\newtheorem{thmx}{Theorem}
\newcommand{\PF}{\operatorname{PF}}
\newcommand{\PPF}{\operatorname{PPF}}
\newcommand{\UIPF}{\operatorname{UPF}}
\newcommand{\ppof}[1]{#1'}
\newcommand{\sii}{\genfrac{\{}{\}}{0pt}{}} 
\newcommand{\uIPFn}{\operatorname{UPF}_n}
\newcommand{\uIPF}[2]{\operatorname{UPF}_{#1}(#2)} 
\newcommand{\T}{\mathcal{T}}
\newenvironment{acknowledgements} {\begin{abstract}} {\end{abstract}}
\newtheorem{theorem}{Theorem}[section]
\newtheorem{proposition}[theorem]{Proposition}
\newtheorem{lemma}[theorem]{Lemma}
\newtheorem{corollary}[theorem]{Corollary}
\theoremstyle{definition}
\newtheorem{definition}[theorem]{Definition}
\newtheorem{remark}[theorem]{Remark}
\newtheorem{example}[theorem]{Example}
\newcounter{cases}
\newcounter{subcases}[cases]
\newcommand\restr[2]{{
  \left.\kern-\nulldelimiterspace 
  #1 
  \vphantom{\big|} 
  \right|_{#2} 
  }}
\newcommand\dyckpath[5]{
  \begin{scope}[local bounding box=#4]
    \fill[white]  (#1) rectangle +(#2,#2);
    \fill[red!5!white] (#1) foreach \dir in {#3}{-- ++(\dir*90:1)} |- (#1);
    \path[fill] (#1) foreach \i [count=\j] in {1,...,#5}{ +(\i - .5,0) node[anchor=north]{\j} \ifnum\i>#2 circle (1pt) \fi};
    \draw[help lines] (#1) grid +(#2,#2);
    \draw[line width=2pt] (#1) foreach \dir in {#3}{ -- ++(\dir*90:1)};
  \end{scope}
}
\title{Unit-Interval Parking Functions and the Permutohedron}
\author[Chaves Meyles]{Lucas Chaves Meyles}
\address[L.~Chaves Meyles]{Department of Mathematics, University of California Los Angleles, Los Angeles, CA 90095}
\email{\textcolor{blue}{\href{mailto:lchavesmeyles@gmail.com}{lchavesmeyles@gmail.com}}}
\author[Harris]{Pamela E. Harris}
\address[P.~E. Harris]{Department of Mathematical Sciences, University of Wisconsin-Milwaukee, Milwaukee, WI 53211}
\email{\textcolor{blue}{\href{mailto:peharris@uwm.edu}{peharris@uwm.edu}}}
\author[Jordaan]{Richter Jordaan}
\address[R.~Jordaan]{Department of Mathematics, Massachusetts Institute of Technology, Cambridge, MA 02139}
\email{\textcolor{blue}{\href{mailto:rjordaan@mit.edu}{rjordaan@mit.edu}}}
\author[Rojas Kirby]{Gordon Rojas Kirby}
\address[G.~Rojas Kirby]{Department of Mathematics, San Diego State University, San Diego, CA 92182}
\email{\textcolor{blue}{\href{mailto:}{gkirby@sdsu.edu}}}
\author[Sehayek]{Sam Sehayek}
\address[S.~Sehayek]{Department of Mathematics, University of California Santa Barbara, Santa Barbara, CA 93117}
\email{\textcolor{blue}{\href{mailto:}{ssehayek@ucsb.edu}}}
\author[Spingarn]{Ethan Spingarn}
\address[E.~Spingarn]{Department of Mathematics, Amherst College, Amherst, MA 01002}
\email{\textcolor{blue}{\href{mailto:espingarn23@amherst.edu}{espingarn23@amherst.edu}}}
\begin{document}

\begin{abstract}
Unit-interval parking functions are subset of parking functions in which cars park at most one spot away from their preferred parking spot. In this paper, we characterize unit-interval parking functions by understanding how they decompose into prime parking functions and count unit-interval parking functions when exactly $k<n$ cars do not park in their preference.
This count yields an alternate proof of a result of  Hadaway and Harris establishing that unit-interval parking functions are enumerated by the Fubini numbers.
Then, our main result, establishes that for all integers $0\leq k<n$, the unit-interval parking functions of length $n$ with displacement $k$ are in bijection with the $k$-dimensional faces of the permutohedron of order $n$.
We conclude with some consequences of this result.

\end{abstract}

\maketitle

\section{Introduction}

Throughout we let $\mathbb{N}\coloneqq\{1,2,3,\ldots\}$ and whenever $n\in\mathbb{N}$ we let $[n]\coloneqq\{1,2,\ldots,n\}$. We recall that 
a \textbf{parking function} of length $n$ is a $n$-tuple $\alpha=(a_1,a_2,\ldots,a_n)\in[n]^n$ whose nondecreasing rearrangement $\beta=(b_1,b_2,\ldots,b_n)$ satisfies $b_i\leq i$ for all $i\in[n]$.
One can also define these combinatorial objects through the following parking scenario. 
A parking function of length $n$ is an assignments of preferences for $n$ cars attempting to park on a one-way street with $n$ parking spots, such that every car can find a parking spot using the following parking protocol:

Cars park one at a time (in order 1 through $n$), if a car's preferred parking spot is occupied upon its arrival, then the car proceeds down the one-way street and parks in the first unoccupied parking spot beyond its preference. 
For example, $(2,1,3,3)$ is a parking function in which car one prefers spot two and parks there, car two prefers spot one and parks there, while both cars three and four prefer spot three, and car three parks in spot three, but car four must proceed and park in spot four. However, $(1,3,3)$ is not a parking function as car three is unable to park in one of the first three parking spots.
We let $\PF_n$ denote the set of parking functions of length $n$.

Parking functions were introduced by Konheim and Weiss and by Pyke and Riordan in their study of linear probing resolution strategies for random hashing functions \cite{KonheimAndWeiss, Riordan}. 
They established that $|\PF_n|=(n+1)^{n-1}$. 
Since their introductions, many have studied parking functions and their connections to graph theory, probability, hyperplane arrangements, volume of polytopes, and more \cite{stanley1997parking,stanley2002polytope}. For a wonderful survey on parking functions we recommend \cite{yan2015parking}.
Others have generalized the concept of parking functions in a number of directions to include the street having more parking spots than cars, a street with some spots occupied prior to the cars arriving, parking protocols allowing cars to back up when finding their preferred spot occupied, cars having a set of preferences rather than a single preference, and cars with varying lengths \cite{parkingcompletion,knaples,colaric2021interval,knaplesperms,CountingPAandPS}.
For the reader interested in open problems related to parking functions we recommend \cite{Choose}.

Parking functions have also been studied based on their statistics: enumerations that describe certain properties. For example, Gessel and Seo enumerated parking functions based on the number of lucky cars, those which park in their preferred spot \cite{GesselSeo}, and Schumacher enumerated parking functions based on the number of ascents, descents, and ties,  \cite{Schumacher}, and Adeniran and Pudwell gave enumerations of parking functions avoiding certain patterns \cite{adeniran2022pattern}.

A statistic central to our study is called the \emph{displacement statistic}, defined as follows.
If $\alpha=(a_1,a_2,\ldots,a_n)\in\PF_n$, then, for any $i \in [n]$, car $i$ has preference $a_i$ and parks in spot $s_i$ and we define the \textbf{displacement of car $i$} by $d_i=s_i - a_i$, which measures the distance between where car $i$ actually parked and where it wanted to park.
Then the \textbf{displacement}\footnote{Note that by thinking of parking functions as encoding the collision resolution scheme of a random hashing function,  the displacement of an item is the number of linear probes required to insert it into the hash table. Thus the total displacement encodes the total number of linear probes during the insertion of all items into the hash table. 
} of a parking function $\alpha$ is 
\[ D(\alpha) = \sum_{i \in [n]} d_i.\] 
For example, $D(2,1,3,3)=1$ as the only car which is displaced is the fourth car, and it is displaced by one. Note that $0\leq D(\alpha)\leq \binom{n-1}{2}$ for all $\alpha\in\PF_n$, where the left equality is achieved when $\alpha$ is a permutation of $[n]$  and the right equality is achieved when $\alpha$ is the all ones preference list.
Aguillon et.~al \cite{tower} showed that the set of parking functions of length $n$ for which $D(\alpha)=1$ is in bijection with the set of ideal states in the famous Tower of Hanoi game with $n+1$ disks and $n+1$ pegs, both sets being enumerated by the Lah numbers \cite[\href{http://oeis.org/A001286}{A001286})]{OEIS}.

In our work we consider parking functions with displacement $k$.  However, the value~$k$ can arise in many ways, i.e.~parking functions with displacement $k$ could have a single car being displaced by $k$ or $k$ distinct cars each being displaced by one. 
Thus, to study parking functions with displacement $k$, one must consider all integer partitions of $k$, as well as which cars contribute a part to the partition.
To record both pieces of information we introduce the  \textbf{displacement vector} of $\alpha\in\PF_n$, defined by $V(\alpha)=(d_1,d_2,\dots,d_n)$. 
Arranging the entries of the displacement vector in nonincreasing order allows us to think of this vector as an integer partition, which we henceforth refer to as the \textbf{displacement partition}.
More precisely, given $\alpha\in\PF_n$ the displacement partition $\lambda\vdash D(\alpha)$ is the integer partition of $D(\alpha)$ consisting of the nonzero entries of $V(\alpha)$ rearranged into nonincreasing order. We remark that  as we consider parking functions with fixed displacement, deleting zeros and rearranging the entries of the displacement vector into nondecreasing order, is inconsequential in our analysis, as the partition simply allows us to determine the number of cars that are displaced and by how much.  

For example, if $\alpha=(2,2,1,1)$, then $D(\alpha)=4$, $V(\alpha)=(0,1,0,3)$, and $\lambda=(3,1)$.
In the case that $\alpha$ is the all ones preference list, then $D(\alpha)=\binom{n-1}{2}$, $V(\alpha)=(0,1,2,\ldots,n-1)$, and $\lambda=(n-1,n-2,\dots, 1)$.

In this paper, we study the set of \textbf{unit-interval parking functions}\footnote{We remark that a unit-interval parking function can be thought of as an optimal outcome of the linear probing collision resolution scheme: any collision in the hash table is resolved in a single linear probe.}
 of length $n$, denoted by $\UIPF_n$, defined by Hadaway and Harris as the set of parking functions where each car's displacement is at most one \cite{Hadaway}. 
They showed that unit-interval parking functions are enumerated by the Fubini numbers\footnote{Also known as the ordered Bell numbers \cite[\href{http://oeis.org/A000670}{A000670})]{OEIS}.} \cite[Theorem~5.12]{Hadaway}:
\begin{align}
    |\UIPF_{n}|=\sum _{{k=0}}^{n}\sum _{{j=0}}^{k}(-1)^{{k-j}}{\binom  {k}{j}}j^{n}.\label{eq;Fubini numbers}
\end{align}
Note that unit-interval parking functions 
are a specific type of the broader class of interval parking functions introduced and studied in  \cite{colaric2021interval}.
We also remark that there is a correspondence between weakly increasing unit-interval parking functions and Dyck paths of height at most one \cite{baril2018dyck}, which we detail and leverage in Remark~\ref{rem:characterize}.
Note that by definition, if $\alpha$ is a unit-interval parking function, then the entries of $V(\alpha)$ are zero or one and $\lambda = (1,1,\ldots,1)\vdash D(\alpha)$. 
We let $\uIPF{n}{k}=\{\alpha\in\uIPFn\,:\, D(\alpha)=k\}$ denote the set of unit-interval parking functions of length $n$ with displacement $k$. 

The main result of this paper is the  surprising connection between $\uIPF{n}{k}$ and the permutohedron. Before formally stating this result, we recall that the permutohedron (of order $n$), denote by $P(n)$, 
is defined by taking the convex hull of the permutations of the elements in $[n]$. 
Note that $P(n)$ is an $(n-1)$-dimensional polytope embedded in $n$-dimensional space. We  now state our main result.

\begin{thmx}[\ref{thm: pf to pphedron}]
For all integers $0\leq k<n$, unit-interval parking functions of length $n$ with displacement~$k$ are in bijection with the $k$-dimensional faces of the permutohedron of order $n$.
\end{thmx}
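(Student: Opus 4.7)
The plan is to construct an explicit bijection between $\uIPF{n}{k}$ and the set of ordered set partitions of $[n]$ into $n-k$ nonempty blocks, and then invoke the classical fact that the $k$-dimensional faces of the permutohedron $P(n)$ are in bijection with such ordered set partitions (with dimension equal to $n$ minus the number of blocks). This strategy is natural because Equation~\eqref{eq;Fubini numbers} already tells us that $|\uIPFn|$ equals the total number of ordered set partitions of $[n]$, so the task reduces to refining that count by displacement.

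First I would define the forward map. Given $\alpha \in \uIPF{n}{k}$, run the parking protocol to obtain the outcome permutation $\pi$ sending each car $i$ to its parking spot $s_i$. Since every displacement is $0$ or $1$, the spots $[n]$ are partitioned into maximal \emph{runs} $[l,r]$ such that the car in spot $l$ is lucky and each car in spots $l+1, \ldots, r$ is displaced by exactly one. The number of such runs equals the number of lucky cars, namely $n-k$. Pulling these runs back through $\pi$ and ordering them left-to-right produces an ordered set partition of $[n]$ into $n-k$ blocks.

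Next I would describe the inverse map. Given an ordered set partition $(B_1, \ldots, B_m)$ with $|B_j| = p_j$, prefix sums $L_j = p_1 + \cdots + p_{j-1}$, and $B_j = \{i_{j,1} < \cdots < i_{j, p_j}\}$, set $a_{i_{j, 1}} = L_j + 1$ and $a_{i_{j,t}} = L_j + t - 1$ for $t \geq 2$. The claim is that this defines a unit-interval parking function in $\uIPF{n}{k}$ whose forward image recovers the original ordered set partition.

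The main obstacle is verifying that the inverse map lands in $\uIPF{n}{k}$. The crucial observation is that the preferences of cars assigned to distinct blocks occupy disjoint subintervals of $[n]$, so the parking dynamics of each block proceed independently of the others. A short induction on car index within each block then shows that when car $i_{j,t}$ arrives (for $t \geq 2$), the spots $L_j + 1, \ldots, L_j + t - 1$ are already occupied by earlier-arriving cars of $B_j$, forcing $i_{j,t}$ to displace to spot $L_j + t$ with displacement exactly $1$; meanwhile $i_{j,1}$ always parks luckily at $L_j + 1$. Hence exactly $m$ cars are lucky and $n - m$ are displaced by one, giving total displacement $n - m = k$, and the two maps are mutually inverse by construction. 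The theorem then follows by composing with the standard bijection between ordered set partitions of $[n]$ into $n-k$ blocks and the $k$-dimensional faces of $P(n)$.
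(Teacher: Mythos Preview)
Your proposal is correct and is essentially the same argument as the paper's: your ``runs'' $[l,r]$ are exactly the component primes of the parking rearrangement (the intervals between consecutive breakpoints), and your forward and inverse maps coincide with the paper's maps $\phi$ and $\psi$ between $\uIPFn$ and ordered set partitions, after which both you and the paper invoke the classical identification of $k$-faces of $P(n)$ with ordered set partitions into $n-k$ blocks. The only real difference is packaging: the paper first develops the prime-decomposition language (Lemma~\ref{lem: unique_prime_unit_interval_pf}, Theorem~\ref{thm: upf decomposition}) and then defines $\psi,\phi$, whereas you describe the same bijection directly in terms of lucky versus displaced cars; one small point you leave implicit is that within a run the car indices increase with parking spot (since the car in spot $l+s$ found spot $l+s-1$ already occupied), which is what makes the forward map followed by your inverse recover $\alpha$ exactly.
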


\begin{figure}[H]
\centering
\resizebox{2.25in}{!}{
\begin{tikzpicture}[z={(0,0,.5)}, line join=round, scale = 2]

\node at (2, 0) (123) {$123$};
\node at (1, 1.732) (132) {$132$};
\node at (-1, 1.732) (231) {$231$};
\node at (-2, 0) (321) {$321$};
\node at (-1, -1.732) (312) {$312$};
\node at (1, -1.732) (213) {$213$};


\draw[line width = 0.5mm] (123) -- (132) node[color = red, fill = white, midway] (122) {$122$} -- (231) node[color = red, fill = white, midway] (131) {$131$} -- (321) node[color = red, fill = white, midway] (221) {$221$} -- (312) node[color = red, fill = white, midway] (311) {$311$} -- (213) node[color = red, fill = white, midway] (212) {$212$} -- (123) node[color = red, fill = white, midway] (113) {$113$};

\node[color = teal] at (0, 0) (112) {$112$};
\draw[color = cyan] (311) -- (112);
\draw[color = cyan] (212) -- (112);
\draw[color = cyan] (113) -- (112);
\draw[color = cyan] (122) -- (112);
\draw[color = cyan] (131) -- (112);
\draw[color = cyan] (221) -- (112);

\end{tikzpicture}
}
\caption{The permutahedron $P(3)$ with elements of $\uIPF{3}{0}$, $\uIPF{3}{1}$, and $\uIPF{3}{2}$ labeling vertices, edges, and faces, respectively.}
\label{fig: 3-permutohedron}
\end{figure}
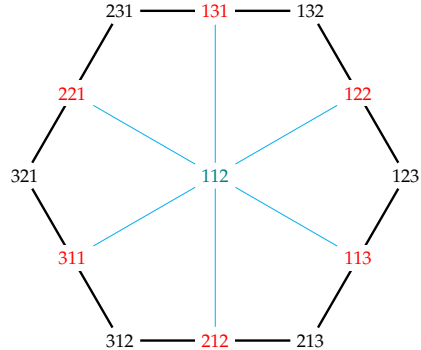

Figure~\ref{fig: 3-permutohedron} illustrates\footnote{To ease notation within our figures and elsewhere, we take the liberty of omitting commas and parenthesis from the parking function notation in certain instances.}  Theorem~\ref{thm: pf to pphedron} when $n=3$. Note that the vertices of $P(3)$ are the permutations, which are precisely the elements of $\uIPF{3}{0}$, the edges of $P(3)$ are labeled by the elements of $\uIPF{3}{1}=\{(1,1,3),(1,2,2),(1,3,1),(2,1,2),(2,2,1),(3,1,1)\}$, and the hexagonal face of $P(3)$ corresponds to the element of 
$\uIPF{3}{2}=\{(1,1,2)\}$, which is the unique unit-interval parking function of length $3$ that displaces two cars by one.

In Section~\ref{sec:pporder&primes}, we introduce the needed technical background to make our approach precise and establish Theorem~\ref{thm: pf to pphedron}. 
This includes introducing a particular rearrangement of parking functions that preserves displacement partitions. 
In the case of unit-interval parking functions, this is a canonical choice of rearrangement into weakly increasing order. Using this rearrangement, we explore how displacement of a parking function is completely realized by what we call component primes, which are prime parking functions, defined by Gessel, as parking functions of length $n$ whose removal of any single instance of one returns a parking function of length $n-1$ (c.f. Exercise 5.49 in \cite{stanley1999enumerative}). 
This prime decomposition of parking functions---expanded upon in \cite{PFFixedDisplacement}---is a powerful reframing 
that is key to proving Theorem~\ref{thm: pf to pphedron} in Section~\ref{sec:permutohedron}. 
We conclude by showing that 
the structure of the permutohedron and the action of the symmetric group on it illuminates a natural structure within unit-interval parking functions that is respected by this symmetric group action.

\begin{remark}\label{rem: fubini rankings}
We conclude this introduction by noting that Hadaway and Harris gave a bijection between unit-interval parking functions of length $n$ and Fubini rankings, which are $n$-tuples giving the rankings in a competition among $n$ competitors where ties are allowed.
Fubini rankings were known to be enumerated by the Fubini numbers given in \cite{Hadaway}.
Unlike unit-interval parking functions, Fubini rankings are invariant under the action of the symmetric group. Namely, any permutation of a Fubini ranking yields a Fubini ranking. 

Thinking of Fubini rankings as weak orderings on an $n$ element set, it was remarked in \cite{ovchinnikov2004weak} that Fubini rankings 
are in correspondence with the faces of all dimensions of a permutohedron, though no explicit bijection was provided.
Given that unit-interval parking functions are not permutation invariant, it is more surprising that unit-interval parking functions continue to be in bijection with the $k$-dimensional faces of the permutohedron in such a natural way.

\end{remark}

\section{Parking Order and Prime Decomposition}\label{sec:pporder&primes}
We begin by defining parking-ordered parking functions and the prime decomposition of a parking function. These concepts lay the groundwork for our bijections in later sections. We remark that these two ideas have rich properties and applications; we point the interested reader to \cite{PFFixedDisplacement}. However, given our objective of proving Theorem \ref{thm: pf to pphedron}, we focus our results to the set of unit-interval parking functions.

\begin{definition}\label{def: partition-preserving order}
We say that a parking function $\alpha = (a_1, \dots, a_n) \in PF_n$ is \textbf{parking-ordered} if $a_i \leq i$ for all $i \in [n]$. 

\end{definition}

Note that in a parking-ordered parking function, the car with preference $a_i$ parks in spot $i$. 
Moreover, any parking function $\alpha = (a_1, \dots, a_n)$ can be rearranged into a parking-ordered parking function $\alpha'$ (also referred to as the \textbf{parking rearrangement} of $\alpha$) as follows. Let $Sym_n$ denote the permutations of the set $[n]$, written in one -line notation within an $n$-tuple. Let $\sigma^{-1} = (s_1, \dots, s_n)\in Sym_n$ 
record where each car parks under the preference list $\alpha$. I.e. the $i$th car of $\alpha$ parks in spot $s_i$. In the parking function literature such as \cite{colaric2021interval}, this permutation is often referred to as the \textbf{parking outcome} of $\alpha$. Then, $\alpha'=\sigma(\alpha)=(a_{\sigma(1)},\dots, a_{\sigma(n)})$, where $\sigma\in Sym_n$ acts on an $n$-tuple by permuting its coordinates.
In other words, if the car with preference $a_i$ parks in spot $s_i$, then the $s_i$-th entry of of $\ppof{\alpha}$ is equal to $a_i$. 

For example, if $\alpha=(2,5,4,6,1,1)$, then the parking outcome of $\alpha$ is $\sigma^{-1}=(2,5,4,6,1,3)$, with $\sigma=(5,1,6,3,2,4)$.  The parking rearrangement of $\alpha$ is $\ppof{\alpha}=\sigma(\alpha)=(1,2,1,4,5,6)$.

Moreover, the parking rearrangement of a parking function is parking-ordered and respects the displacement partition.
\begin{lemma}\label{lem: pp_dp_preserving}

Let $\alpha = (a_1, \dots, a_n) \in PF_n$ and $\alpha'$ be the parking rearrangement of $\alpha$. Then, $\alpha'$ is a parking function that has the same displacement partition as $\alpha$. Furthermore, $\alpha'$ is in parking order.
\end{lemma}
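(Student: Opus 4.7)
The plan is to verify the three assertions in order: parking order, then being a parking function, then preservation of the displacement partition. The key observation that drives everything is that cars never move backwards during the parking process, so $a_i \le s_i$ for every $i \in [n]$, where $\sigma^{-1} = (s_1,\dots,s_n)$ is the parking outcome of $\alpha$.

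For the parking order claim, I would unpack the definition $\alpha' = \sigma(\alpha)$. By construction, the $s_i$-th entry of $\alpha'$ equals $a_i$. Combined with $a_i \le s_i$, this gives $a'_{s_i} \le s_i$. Since $(s_1,\dots,s_n)$ is a permutation of $[n]$, as $i$ ranges over $[n]$ so does $s_i$, and we conclude $a'_j \le j$ for every $j \in [n]$. So $\alpha'$ is parking-ordered in the sense of Definition~\ref{def: partition-preserving order}.

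Next I would establish that $\alpha'$ is in fact a parking function by running the parking process on it and showing, by induction on $j$, that the $j$-th car parks in spot $j$. The base case $j=1$ follows because $a'_1 \le 1$ forces $a'_1 = 1$, so car $1$ parks at spot $1$. For the inductive step, assume cars $1,\dots,j-1$ have parked in spots $1,\dots,j-1$ respectively. Car $j$ starts at $a'_j \le j$, finds spots $a'_j, a'_j+1, \dots, j-1$ all occupied, and hence parks at spot $j$, which is vacant. This simultaneously shows every car successfully parks (so $\alpha'$ is a parking function) and that the parking outcome of $\alpha'$ is the identity permutation.

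Finally, for the displacement partition, I would compute: in $\alpha$, car $i$ has displacement $d_i = s_i - a_i$, while in $\alpha'$ the car in position $s_i$ has preference $a'_{s_i} = a_i$ and parks at spot $s_i$ by the previous step, so its displacement is also $s_i - a_i = d_i$. Therefore the displacement vector $V(\alpha')$ is the permutation of $V(\alpha)$ sending coordinate $i$ to coordinate $s_i$. Since the displacement partition depends only on the multiset of entries of the displacement vector, $\alpha$ and $\alpha'$ share the same displacement partition. I do not expect any real obstacle; the most care-requiring piece is the inductive verification in the third paragraph, but it is routine once one tracks which spots remain unoccupied at each step.
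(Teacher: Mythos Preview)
Your proposal is correct and follows essentially the same argument as the paper: both hinge on $a_i \le s_i$ and on observing that $V(\alpha')$ is just a permutation of $V(\alpha)$. The only minor difference is that the paper gets $\alpha' \in \PF_n$ in one line by invoking the permutation-invariance of parking functions, whereas you verify it directly via the induction showing car $j$ parks in spot $j$; your route has the advantage of making explicit the fact (used tacitly in the paper's computation of $V(\alpha')$) that the parking outcome of $\alpha'$ is the identity.
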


\begin{proof}
By Definition \ref{def: partition-preserving order}, $\ppof{\alpha}$ is a permutation of the elements of $\alpha$ so $\alpha'$ is a parking function of length $n$. Next, consider the displacement vector of $\ppof{\alpha}$,
\[
V(\alpha')=(1-a_{\sigma(1)},\dots, n-a_{\sigma(n)})=(s_{\sigma(1)}-a_{\sigma(1)},\dots,s_{\sigma(n)}-a_{\sigma(n)})=\sigma(V(\alpha)).
\]
Hence, the displacement vector $V(\alpha')$ is just a permutation of $V(\alpha)$ so that $\alpha$ and $\alpha'$ share the same displacement partition.

We now show that $\ppof{\alpha}$ is in parking order. Notice that it is impossible for a car with preference $a_i$ to park in a spot which is less than $a_i$. Hence, $a_i \leq s_i$ for all $i \in [n]$. Applying $\sigma$ to $\alpha$ and the parking outcome, we obtain the inequalities $a_{\sigma(i)}\leq s_{\sigma(i)}=i$ for each $i$, proving that $\ppof{\alpha}$ is in parking order. 

\end{proof}

A major advantage of the parking rearrangement of a parking function $\alpha$ is that one can easily identify what we call the component primes of $\alpha$ from this reordering. To clarify this idea, we begin by recalling prime parking functions and breakpoints of a parking function from \cite{GILBEY1999351}. We also introduce a useful set operation called the pipe. 
\begin{definition}\label{def: breakpoint}
We say a parking function $\alpha=(a_1,\dots, a_n)$ has a \textbf{breakpoint} at $k$ if exactly $k$ cars want to park in the first $k$ spaces, i.e. $|\{i\mid a_i\leq k\}|=k$. We say that a parking function of length $n$ is \textbf{prime} if it has a single breakpoint at $n$.
\end{definition}

For example $(1,1,2)$ is a prime parking function with a single break point at 3, while $(1,1,3)$ is not prime and has break points at 2 and 3. Also, observe that any rearrangement of a prime parking function is prime.

We let $\PPF_n$ denote the set of all prime parking functions of length $n$. It is well-known that $|\PPF_n|=(n-1)^{n-1}$, see \cite{armstrong2016rational,GILBEY1999351,stanley1999enumerative} for proofs. In the case of unit-interval parking functions there is a unique prime unit-interval  parking function of length $n$.

\begin{lemma}\label{lem: unique_prime_unit_interval_pf}
Let $\alpha\in\UIPF_{n}$. Then $\alpha$ is prime if and only if $\alpha=(1, 1, 2, 3, \dots, n-1)$.

\end{lemma}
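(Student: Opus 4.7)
The plan is to handle the two directions of the biconditional separately. For the backward direction, I would directly verify that $\alpha=(1,1,2,3,\ldots,n-1)$ lies in $\UIPF_n$ by running the parking process: car~$1$ parks in spot~$1$, and each car $i\geq 2$ finds its preference $i-1$ already taken and parks in spot~$i$, so every displacement is at most~$1$. Primality then follows from Definition~\ref{def: breakpoint}: for each $k\in[n-1]$, exactly $k+1$ cars have preferences in $[k]$ (namely the two $1$'s together with $2,3,\ldots,k$), so no $k<n$ is a breakpoint.

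For the forward direction, suppose $\alpha\in\UIPF_n$ is prime and consider its parking rearrangement $\alpha'=(a_1',\ldots,a_n')$. By Lemma~\ref{lem: pp_dp_preserving}, $\alpha'$ is a parking function in parking order sharing the displacement partition of $\alpha$, so $\alpha'\in\UIPF_n$; primality transfers to $\alpha'$ since it depends only on the multiset of preferences. In parking order the displacement of car~$i$ is $i-a_i'$, so the unit-interval condition $i-a_i'\leq 1$ combined with $a_i'\leq i$ forces $a_i'\in\{i-1,i\}$ for $i\geq 2$ and $a_1'=1$. For $k\in[n-1]$, the number of preferences lying in $[k]$ equals $k$ if $a_{k+1}'=k+1$ and equals $k+1$ if $a_{k+1}'=k$; since $\alpha'$ is prime it has no breakpoint at $k$, so $a_{k+1}'=k$ for every $k\in[n-1]$. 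This yields $\alpha'=(1,1,2,3,\ldots,n-1)$.

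The main obstacle is upgrading this from $\alpha'$ to $\alpha$ itself, since the parking rearrangement can in principle reorder the entries. The plan is to reconstruct the index of each car by tracking where it parks and when it must arrive. Because $V(\alpha)$ is a permutation of $V(\alpha')=(0,1,1,\ldots,1)$, exactly one car is undisplaced and the rest are displaced by~$1$. Working from the right: since no preference equals~$n$, the car in spot~$n$ has preference~$n-1$ (the only value allowing displacement $\leq 1$) and is displaced. Spot~$n-1$ must then be filled by a displaced car of preference~$n-2$, and iterating shows that for each $k\in\{2,\ldots,n-1\}$ the unique car $C_k$ of preference~$k$ parks in spot~$k+1$, while the two preference-$1$ cars $C_0,C_1$ occupy spots~$1$ and~$2$ respectively (with $C_0$ undisplaced and $C_1$ displaced). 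For each $C_k$ with $k\geq 1$ to be displaced to spot~$k+1$, the occupant of spot~$k$ must have already arrived, which is $C_0$ when $k=1$ and $C_{k-1}$ when $k\geq 2$; this forces the strict arrival order $C_0<C_1<\cdots<C_{n-1}$. As there are exactly $n$ cars, this identifies $C_k$ with car~$k+1$ and gives $\alpha=(1,1,2,3,\ldots,n-1)$.
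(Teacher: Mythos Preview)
Your proof is correct. The backward direction and the determination of the parking rearrangement $\alpha'=(1,1,2,\ldots,n-1)$ match the paper's argument essentially step for step: both pass to $\alpha'$ via Lemma~\ref{lem: pp_dp_preserving}, note that parking order plus the unit-interval constraint force $a_i'\in\{i-1,i\}$, and then use primality to rule out $a_{k+1}'=k+1$ for each $k<n$. Your counting formulation (the number of preferences in $[k]$ equals $k+[a_{k+1}'=k]$) is a slightly cleaner packaging of the same inductive step the paper sketches.

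Where you diverge is in upgrading from $\alpha'$ to $\alpha$. The paper argues by contradiction: take the minimal index $i$ with $a_i\neq a_i'$, observe that cars $1,\ldots,i-1$ already agree with $\alpha'$ (so car $1$ has displacement $0$), and show car $i$ also parks in its preference, producing a second undisplaced car and violating the shared displacement partition. You instead reconstruct the arrival order directly: having pinned down which car occupies each spot, you chain the precedence constraints ``the occupant of spot $k$ must arrive before $C_k$'' to force $C_0<C_1<\cdots<C_{n-1}$. Your approach is more constructive and arguably more transparent, since it exhibits the ordering outright rather than deriving it from a minimal counterexample; the paper's approach is shorter once one sees that two undisplaced cars is the contradiction. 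Both are valid routes to the same conclusion.
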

\begin{proof}

If $\alpha=(1,1,2,3,\ldots,n-1)$, then each car except the first is displaced exactly one space, and $\alpha$ has a single breakpoint at $n$ so that it is prime. 

Conversely, suppose that $\alpha=(a_1,\dots, a_n)\in \UIPF_n$ is prime. Let $\alpha'=(a_1',\dots, a_n')$ be its parking order rearrangement. Then $\alpha'$ is prime and Lemma \ref{lem: pp_dp_preserving} implies $\alpha'\in \UIPF_n$. Since $\alpha'$ is a unit-interval parking function, $i-1\leq a_i'\leq i$ for each $i$. Since $\alpha'$ is also prime the only breakpoint is at $n$, i.e. $|\{k\mid a_i\leq k \}|>k$ for each $1\leq k<n$. Since $\alpha'$ is in parking order $a_1'=1$, which implies $a_2'=1$ and this implies $a_3'=2$. Continuing in the fashion we have $\alpha'=(1,1,2,3,\ldots,n-1)$.

If $\alpha=\ppof{\alpha}$ we are done. So suppose $\alpha\neq \ppof{\alpha}$. Hence there is some minimal index $i$ such that $a_i\neq \ppof{a}_i$. 
Thus, $a_i>\max(1, i-1)$ and since only the first $i-1$ spots have already been occupied, car $i$ parks in its preference. 
This contradicts the fact that $\alpha$ has the same displacement partition as $\alpha'$, in which each car except the first is displaced exactly one spot.

Therefore, $\alpha=\ppof{\alpha}=(1,1,2,\dots, n-1)$ as desired. 
\end{proof}

Lemma~\ref{lem: unique_prime_unit_interval_pf} shows that any prime unit-interval parking function displaces $n-1$ cars each by one unit, immediately implying the following.

\begin{corollary}
Let $\alpha\in\UIPF_{n}$. Then $\alpha$ is a prime parking function 
if and only if $D(\alpha)= n-1$.
\end{corollary}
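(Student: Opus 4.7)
The plan is to handle both directions, using Lemma~\ref{lem: unique_prime_unit_interval_pf} as the key input; indeed, the remark preceding the corollary already signals that the claim should drop out of that lemma with minimal effort.

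For the forward direction, if $\alpha \in \UIPF_n$ is prime, then Lemma~\ref{lem: unique_prime_unit_interval_pf} pins it down as $\alpha = (1,1,2,\ldots,n-1)$. Running the parking protocol directly, car $1$ parks in spot $1$ without displacement, and for each $i \geq 2$ the car prefers $i-1$, finds it occupied, and parks in spot $i$, contributing displacement $1$. Summing the $n-1$ contributions of $1$ yields $D(\alpha) = n-1$.

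For the converse, I would first pass to the parking rearrangement $\alpha' = (a_1', \ldots, a_n')$. By Lemma~\ref{lem: pp_dp_preserving}, $\alpha'$ is parking-ordered and shares its displacement partition with $\alpha$, so in particular $\alpha' \in \UIPF_n$ and $D(\alpha') = n-1$. Being parking-ordered means car $i$ of $\alpha'$ parks in spot $i$, and combined with the unit-interval condition this forces $a_i' \in \{\max(1, i-1), i\}$ for each $i$. Since $a_1' = 1$ automatically contributes $d_1 = 0$, the only way for $\sum_i d_i = n-1$ to hold is for every $i \geq 2$ to satisfy $a_i' = i-1$. Hence $\alpha' = (1,1,2,\ldots,n-1)$, which Lemma~\ref{lem: unique_prime_unit_interval_pf} identifies as prime.

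The last step is transferring primeness back from $\alpha'$ to $\alpha$. This is where the one small piece of care is needed, and is what I would flag as the main conceptual point: primeness, as defined through the breakpoint condition $|\{i : a_i \leq k\}| = k$ in Definition~\ref{def: breakpoint}, depends only on the multiset of entries of the parking function and is therefore invariant under permutation of coordinates. Because $\alpha$ and $\alpha'$ share the same multiset, they share the same breakpoints, so $\alpha$ is prime as well. No substantial obstacle is expected; the whole argument is a short chain of implications once the parking rearrangement has reduced us to the parking-ordered case.
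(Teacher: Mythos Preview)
Your argument is correct. The paper does not give a separate proof of this corollary at all: it simply notes that Lemma~\ref{lem: unique_prime_unit_interval_pf} ``shows that any prime unit-interval parking function displaces $n-1$ cars each by one unit, immediately implying'' the result. Your write-up supplies the details the paper omits, and in particular your treatment of the converse---passing to the parking rearrangement, forcing $\alpha'=(1,1,2,\ldots,n-1)$ by a displacement count, and then invoking permutation-invariance of the breakpoint condition to transfer primeness back to $\alpha$---is exactly the kind of argument one would expect to underlie the paper's ``immediately.'' There is no meaningful difference in approach.
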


 \begin{definition}
    For finite tuples of integers $A=(a_1,\dots,a_m)$ and $B=(b_1,\dots,b_n)$ we write $B+ k$ to mean $(b_1+k,\dots,b_n+k)$ and we denote the \textbf{pipe} $A|B$ as the concatenation of $A$ with $B+|A|$. This definition inductively extends to the pipe of a finitely many tuples. 
\end{definition}
For example, if $A=(1,1,2)$, $B=(1,1,2,3,4)$, and $C=(1,1)$ then the pipe $A|B|C$ is \[A|B|C=(1,1,2,4,4,5,6,7,9,9).\]

Now, we outline the procedure to decompose any parking-ordered unit-interval parking function into prime parking functions. This process can be done with slight modification to the following outline for any parking-ordered parking function---this general setting is discussed in \cite{PFFixedDisplacement} and leveraged to enumerate parking functions with arbitrary displacement partitions. 

\begin{remark}[Prime Decomposition]\label{prime decomp for Unit}
    
Let $\alpha = (a_1, \dots, a_n) $ be a parking-ordered  unit-interval parking function of length $n$. First we partition $\alpha$ into blocks corresponding to prime parking functions, one for each break point. We call these blocks the \textbf{component primes} of $\alpha$. 
Since our underlying parking function is unit-interval, each block of size $r$ is a subsequence of the form $U+(i-1)$ where $i-1$ is either zero or a breakpoint of $\alpha$ and $U=(1,1,2,\dots, r-1)$, which is the unique  prime unit-interval parking function of length~$r$ given by Lemma~\ref{lem: unique_prime_unit_interval_pf}.

Denoting the underlying prime parking functions as $P_1,P_2,\dots, P_k$ (where $k$ is the number of breakpoints) we obtain an ordered list $P_1,P_2,\dots,P_k$ of prime parking functions such that $\alpha=P_1|P_2|\cdots|P_k$ (the pipe of $P_1,P_2,\dots,P_k$). 
 \end{remark}

For an arbitrary parking function, the component primes that arise in its parking rearrangement will also be referred to as component primes. We say a component prime parking function is nontrivial if it has length at least $2$. Otherwise, its displacement is $0$ and as such does not contribute to the displacement partition. All nontrivial  prime parking functions exhibit displacement.

\begin{theorem}\label{thm: upf decomposition}
Let $\alpha$ be a unit-interval parking function in parking order. Then $\alpha$ can be decomposed uniquely into prime unit-interval parking functions. In particular, we can characterize unit-interval parking functions as those parking functions whose component primes are in the form of Lemma \ref{lem: unique_prime_unit_interval_pf}.
\end{theorem}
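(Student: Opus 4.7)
The plan is to combine the standard breakpoint-based prime decomposition of an arbitrary parking function with the rigidity imposed by Lemma~\ref{lem: unique_prime_unit_interval_pf}. Given a parking-ordered $\alpha=(a_1,\dots,a_n)\in\UIPF_n$, I would first list all of its breakpoints $0=k_0<k_1<\cdots<k_m=n$, where $k_m=n$ is automatic since $\alpha$ is itself a length-$n$ parking function. For each $j\in[m]$, setting $r_j=k_j-k_{j-1}$, I define the candidate $j$-th component prime to be
\[
P_j \;=\; (a_{k_{j-1}+1}-k_{j-1},\; a_{k_{j-1}+2}-k_{j-1},\; \dots,\; a_{k_j}-k_{j-1}),
\]
so that $\alpha=P_1|P_2|\cdots|P_m$ by construction.

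The first task is to verify that each $P_j$ is a prime parking function of length $r_j$. Two observations do the work: because $\alpha$ is in parking order and $k_{j-1}$ is a breakpoint, exactly $k_{j-1}$ cars prefer spots in $[k_{j-1}]$, and one checks from the parking-order inequalities $a_i\leq i$ that these are precisely the first $k_{j-1}$ cars. Hence the preferences $a_{k_{j-1}+1},\dots,a_{k_j}$ all land in $\{k_{j-1}+1,\dots,k_j\}$, so the entries of $P_j$ lie in $[r_j]$. Moreover, any breakpoint of $P_j$ strictly between $0$ and $r_j$ would translate to a breakpoint of $\alpha$ strictly between $k_{j-1}$ and $k_j$, contradicting the consecutivity of $k_{j-1}$ and $k_j$; thus $P_j\in\PPF_{r_j}$.

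Next I would show that in fact $P_j\in\UIPF_{r_j}$, so Lemma~\ref{lem: unique_prime_unit_interval_pf} forces $P_j=(1,1,2,\dots,r_j-1)$. The point is that when $P_j$ is simulated as its own parking function of length $r_j$, the block's cars occupy exactly the spots $1,\dots,r_j$ (they cannot leave the block, by the first observation above), and the $\ell$-th car of $P_j$ parks at spot $\ell$ with displacement $\ell-(a_{k_{j-1}+\ell}-k_{j-1})$, which is the same as the displacement of car $k_{j-1}+\ell$ in $\alpha$. That displacement is at most $1$ because $\alpha\in\UIPF_n$, so $P_j$ is a prime unit-interval parking function.

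Finally, uniqueness of the decomposition is immediate because the set of breakpoints is intrinsic to $\alpha$, so the cut points and hence the blocks $P_j$ are uniquely determined. For the converse direction of the characterization, if a parking-ordered parking function $\alpha$ has every component prime equal to $(1,1,2,\dots,r-1)$, then reading off the displacements block by block shows every car is displaced by at most one, so $\alpha\in\UIPF_n$; Lemma~\ref{lem: pp_dp_preserving} then extends this to arbitrary (not necessarily parking-ordered) parking functions, since the parking rearrangement preserves the displacement partition. The main obstacle in this plan is the careful verification in the second paragraph---really a bookkeeping argument---that the breakpoint cuts of $\alpha$ yield genuine prime blocks whose parking scenarios decouple from the rest of $\alpha$; once that is in hand, Lemma~\ref{lem: unique_prime_unit_interval_pf} closes the argument cleanly.
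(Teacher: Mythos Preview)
Your proposal is correct and follows essentially the same approach as the paper: you implement the breakpoint-based prime decomposition described in Remark~\ref{prime decomp for Unit}, observe that each block inherits the unit-interval property because displacements are preserved, and then invoke Lemma~\ref{lem: unique_prime_unit_interval_pf} to pin down the components. The paper's own proof is considerably terser---it simply cites Remark~\ref{prime decomp for Unit} and the identity $(1,2,\dots,n)-V(\alpha)=\alpha$ for parking-ordered $\alpha$, deferring the general uniqueness to \cite{PFFixedDisplacement}---whereas you spell out the bookkeeping (why the blocks are genuine prime parking functions and why their displacements match those of $\alpha$) in full; but the underlying argument is the same.
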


\begin{proof}
The first part of the theorem statement is the content of Remark \ref{prime decomp for Unit} and as noted this can be extended to an arbitrary unit-interval parking function by passing through to the parking rearrangement. The decomposition follows directly from the fact that $(1,2,\dots,n) - V(\alpha)=\alpha$ for parking-ordered parking functions. 
Even for the general case treated in \cite{PFFixedDisplacement}, any parking-ordered parking function has a unique decomposition into prime parking function components.
Since the decomposition respects the displacement, each component in the decomposition of $\alpha$ is itself a unit-interval parking function. By Lemma \ref{lem: unique_prime_unit_interval_pf}, a prime unit-interval parking function is determined by its length and has a very specific form. 
So, a parking function can be determined to be unit-interval by investigating the prime components in its parking rearrangement. 
\end{proof}

\begin{remark}\label{rem:characterize}
    Theorem \ref{thm: upf decomposition} gives a full characterization of unit-interval parking functions. One quick way to check that a parking function is unit-interval is to look at its associated labeled Dyck path. For a description of the bijection we refer the interested reader to \cite{max}. Following the insight of Theorem \ref{thm: upf decomposition} in this context, if the Dyck path is height 1 and the labels in each prime component (areas between returns to the diagonal---these returns are exactly breakpoints as defined above) are increasing, it is unit-interval. The converse is also true as illustrated in Example \ref{ex:DP1}.
\end{remark}


\begin{figure}
\centering

\begin{subfigure}{0.3\textwidth}
    \begin{tikzpicture}[scale = 0.5]
  \dyckpath{0, 0}{7}{1, 0, 1, 1, 0, 1, 0, 1, 0, 0, 1, 1, 0, 0}{dyck}{7};
  \draw[color = blue, dashed] (0, 0) -- (7, 7);

  \node[label=left:$6$] (b1) at (1.25, 0.5) {};
  \node[label=left:$1$] (b2) at (2.25, 1.5) {};
  \node[label=left:$2$] (b3) at (2.25, 2.5) {};
  \node[label=left:$3$] (b4) at (3.25, 3.5) {};
  \node[label=left:$7$] (b5) at (4.25, 4.5) {};
  \node[label=left:$4$] (b6) at (6.25, 5.5) {};
  \node[label=left:$5$] (b7) at (6.25, 6.5) {};
  \end{tikzpicture}

    \caption{}
    \label{fig:first}
\end{subfigure}
\hfill
\begin{subfigure}{0.3\textwidth}
    \begin{tikzpicture}[scale = 0.5]
  \dyckpath{0, 0}{7}{1, 1, 0, 1, 1, 1, 0, 0, 0, 1, 1, 0, 0, 0}{dyck}{7};
  \draw[color = blue, dashed] (0, 0) -- (7, 7);

  \node[label=left:$4$] (b1) at (1.25, 0.5) {};
  \node[label=left:$7$] (b2) at (1.25, 1.5) {};
  \node[label=left:$2$] (b3) at (2.25, 2.5) {};
  \node[label=left:$3$] (b4) at (2.25, 3.5) {};
  \node[label=left:$5$] (b5) at (2.25, 4.5) {};
  \node[label=left:$1$] (b6) at (5.25, 5.5) {};
  \node[label=left:$6$] (b7) at (5.25, 6.5) {};
  \end{tikzpicture}
    \caption{}
    \label{fig:second}
\end{subfigure}
\hfill
\begin{subfigure}{0.3\textwidth}
    
  \begin{tikzpicture}[scale = 0.5]
  \dyckpath{0, 0}{7}{1, 1, 0, 1, 0, 1, 0, 0, 1, 1, 0, 1, 0, 0}{dyck}{7};
  \draw[color = blue, dashed] (0, 0) -- (7, 7);
  \node[label=left:$1$] (b1) at (1.25, 0.5) {};
  \node[label=left:$4$] (b2) at (1.25, 1.5) {};
  \node[label=left:$2$] (b3) at (2.25, 2.5) {};
  \node[label=left:$3$] (b4) at (3.25, 3.5) {};
  \node[label=left:$5$] (b5) at (5.25, 4.5) {};
  \node[label=left:$6$] (b6) at (5.25, 5.5) {};
  \node[label=left:$7$] (b7) at (6.25, 6.5) {};
  \end{tikzpicture}  
    \caption{}
    \label{fig:third}
\end{subfigure}
        
\caption{Visualizing unit-interval parking function example and non-examples.}

   \label{fig: DP1}
\end{figure}
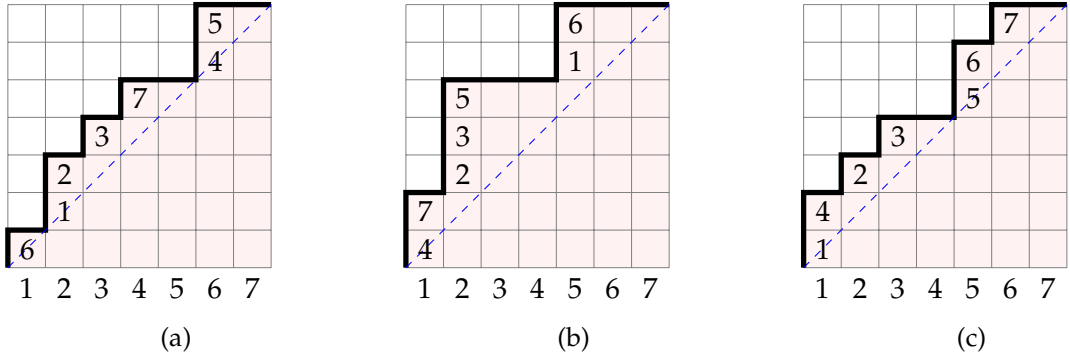
\begin{example}\label{ex:DP1}

Figure \ref{fig:first} illustrates the labeled  Dyck path associated with $(2, 2, 3, 6, 6, 1, 4)$, a non-prime unit-interval parking function. Its increasing rearrangement has prime parking decomposition $(1)|(1,1,2,3)|(1,1)=(1,2,2,3,4,6,6)$ corresponds to decomposition of the Dyck path into the 3 subpaths between consecutive returns to the diagonal.

Figure \ref{fig:second} depicts the labeled Dyck path associated with $(5, 2, 2, 1, 2, 5, 1)$. It is prime but not a unit-interval parking function, which is evident since it is not height one.

However, a labeled Dyck path of height less than one is not sufficient to guarantee that the corresponding parking functions is unit-interval. The parking function $(1, 2, 3, 1, 5, 5, 6)$ is not unit-interval , but corresponds to the Dyck path in  Figure \ref{fig:third} that is height $1$. Indeed, the labels in the first component prime are not strictly increasing.
\end{example}

\begin{remark}\label{rem: pp_order_unit_interval} We conclude by noting that since each unit-interval prime parking function is uniquely determined by its length and is always in nondecreasing order, Theorem~\ref{thm: upf decomposition} implies that the parking rearrangement of a unit-interval parking function agrees with the rearrangement into nondecreasing order. 
\end{remark}


\section{Permutohedron}\label{sec:permutohedron}

In this section, we work with the well-studied description of the permutohedron in terms of ordered set partitions (c.f \cite{billera1994iterated},\cite{oldbook}). Specifically, the $(n-k)$-faces are in bijection with ordered set partitions of $[n]$ into $1\leq k\leq n$ parts, which are {known to be enumerated by 
\cite[\href{https://oeis.org/A019538}{A019538}]{OEIS}: $T(n,k)=k!\sii{n}{k}$, where $\sii{n}{k}$ is the Stirling numbers of the second kind.}

Consequently, we show that there exists an equivariant bijection between the faces of the permutohedron and unit-interval parking functions proving Theorem \ref{thm: pf to pphedron} in a manner that reflects the prime decomposition of these parking functions. Namely, the number of component primes in the prime decomposition (i.e. number of breakpoints as in Definition~\ref{def: breakpoint} and first defined in \cite{GILBEY1999351}) corresponds to the number of blocks in the associated ordered partition, and the sizes of the primes corresponds to the sizes of the blocks.

\begin{definition}\label{def: ordered_set_partition}
An ordered list of pairwise disjoint nonempty subsets $(B_1, \dots, B_k)$, called blocks, with $B_i\subseteq[n]$  is an \textbf{ordered set partition} of $[n]$ if the union $\bigcup B_i=[n]$. Throughout we write ordered set partitions of $[n]$ with $k$ blocks as $B_1 /B_2/ \cdots / B_k$. We write $\T_n$ to denote the set of ordered set partitions.
\end{definition}
 It is well-known that $\T_n$ is enumerated by the Fubini numbers:
\begin{align}
|\T_n|&=\sum_{k=0}^n k!\sii{n}{k},\label{eq:Fubini numbers alt}
\end{align}
where $\sii{n}{k}$ denotes the Stirling numbers of the second kind. Note that equation \eqref{eq:Fubini numbers alt} is an alternate form for the Fubini numbers that is equivalent to the formula given in equation~\eqref{eq;Fubini numbers}.

 \begin{definition}\label{def:T_n to UPFn}
For an ordered set partition $B_1/B_2/\dots/B_k$, where the entries in each $B_i$ are in increasing order there exists a unique unit-interval prime parking function $P_i$ of length $|B_i|$, i.e. $P_i=(1,1,2,\dots,|B_i|-1)$. Let $\sigma\in Sym_n$ be the permutation $(B_1,\dots,B_k)$ in one-line notation. Define $\psi(B_1/B_2/\dots/B_k)=\sigma(P_1|P_2|\cdots|P_k)$. 
 \end{definition}
 
 \begin{definition}\label{def: UPFn to Tn}
     Define $\phi: \uIPFn\to \T_n$ as follows. For $\alpha\in\uIPFn $ denote $\sigma^{-1}=(s_1,\dots, s_n) $ as the parking outcome of $\alpha$. Let $\alpha$ have break points $b_1,\dots,b_k$. Define  \[\phi(\alpha)=\sigma(1),\dots,\sigma(b_1)/\sigma(b_1+1),\dots,\sigma(b_2)/\dots/\sigma(b_{k-1}+1),\dots, \sigma(b_k).\]
 \end{definition}

 \begin{remark}
Since each $B_i$ consists of increasing entries, $\sigma$ does not affect the relative order of elements in the prime parking function $P_i$, and thus by Theorem \ref{thm: upf decomposition}, $\psi$ is well-defined into $\uIPFn$. 
 \end{remark}

\begin{example}\label{ex: bijection}

For example, consider the ordered partition $4/13/2$ consisting of blocks of size one, two, and one, respectively. 

By Definition~\ref{def:T_n to UPFn}:

$P_1=1$, $P_2=11$, $P_3=1$, and $\sigma=(4,1,3,2)$ so following Definition \ref{def:T_n to UPFn} we have  \[\psi(4/13/2)=\sigma(1|11|1)=\sigma(1,2,2,4)=(2,4,2,1).\] Note that the parking outcome of $(2,4,2,1)$ is $\sigma^{-1}=(2,4,3,1)$ and the breakpoints are at $1,3,4$, so following Definition \ref{def: UPFn to Tn} we have \[\phi(2,4,2,1)=\sigma(1)/\sigma(2)\,\sigma(3)/\sigma(4)=4/13/2.\]

\end{example}

 \begin{theorem}
 The maps $\psi$ and $\phi$ are inverses and establish a bijection between $\T_n$ and $\uIPFn$. 
 \end{theorem}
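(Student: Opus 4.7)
The plan is to verify $\psi \circ \phi = \operatorname{id}_{\uIPFn}$ and then invoke the identity $|\uIPFn| = |\T_n|$---both enumerated by the $n$-th Fubini number, as one sees by comparing \eqref{eq;Fubini numbers} with \eqref{eq:Fubini numbers alt}---to conclude that $\psi$ and $\phi$ are mutual inverses. Well-definedness of $\phi$ (as an ordered set partition) is immediate from the fact that $\sigma$ is a permutation of $[n]$ and the breakpoints partition $[n]$ into intervals, and well-definedness of $\psi$ is the content of the remark following Definition~\ref{def: UPFn to Tn}.

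The essential step, which I expect to be the main obstacle, is the following localization lemma: for any $\alpha \in \uIPFn$ with breakpoints $0 = b_0 < b_1 < \cdots < b_k = n$, if $j_1 < j_2 < \cdots < j_r$ are the indices (in $\alpha$) of the $r = b_i - b_{i-1}$ cars that park in the $i$-th prime block (spots $b_{i-1}+1, \ldots, b_i$), then car $j_\ell$ parks in spot $b_{i-1} + \ell$ for each $\ell \in [r]$. To prove this, one observes that the breakpoint at $b_{i-1}$ forces every car with preference at most $b_{i-1}$ to park in $[1, b_{i-1}]$, so the cars $j_1, \ldots, j_r$ interact only among themselves. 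Their preferences (shifted by $-b_{i-1}$) thus form a sub-parking function $\beta$ of length $r$ which is prime (its only breakpoint is $r$) and unit-interval (since displacements of these cars are inherited from $\alpha$). Lemma~\ref{lem: unique_prime_unit_interval_pf} then forces $\beta = (1, 1, 2, \ldots, r-1)$, and running the parking protocol on this nondecreasing sequence sends the $\ell$-th arriving car to the $\ell$-th spot of the block.

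The lemma has two payoffs that make $\psi \circ \phi = \operatorname{id}$ transparent. First, each block $B_i = \{\sigma(b_{i-1}+1), \ldots, \sigma(b_i)\}$ of $\phi(\alpha)$ is already written in increasing order, so the permutation reassembled from the blocks in Definition~\ref{def:T_n to UPFn} matches the parking-outcome permutation $\sigma$ used in Definition~\ref{def: UPFn to Tn}. Second, the unit-interval prime $P_i$ of length $|B_i|$ appearing in $\psi$ is exactly the $i$-th component of the unique prime decomposition $\alpha' = P_1|P_2|\cdots|P_k$ supplied by Theorem~\ref{thm: upf decomposition}. Applying $\sigma$ to $P_1|\cdots|P_k$ therefore inverts the parking rearrangement of $\alpha$ and recovers $\alpha$. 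Together with the cardinality identity, this establishes that $\psi$ and $\phi$ are inverse bijections between $\T_n$ and $\uIPFn$.
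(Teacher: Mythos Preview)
Your proposal is correct. For $\psi\circ\phi=\operatorname{id}$ both you and the paper argue through the prime decomposition of a unit-interval parking function, though your localization lemma makes explicit a point the paper's proof glosses over: that each block $\sigma(b_{i-1}+1),\ldots,\sigma(b_i)$ of $\phi(\alpha)$ is already listed in increasing order, so the permutation rebuilt in Definition~\ref{def:T_n to UPFn} really is the same $\sigma$ coming from the parking outcome. The genuine divergence is in the other direction. The paper verifies $\phi\circ\psi=\operatorname{id}$ directly---computing the breakpoints of $\sigma(P_1|\cdots|P_k)$ and reading off the original blocks---whereas you instead appeal to the cardinality identity $|\uIPFn|=|\T_n|$ supplied by the Fubini-number formulas \eqref{eq;Fubini numbers} and \eqref{eq:Fubini numbers alt}. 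Your shortcut is legitimate, but it costs something in the context of this paper: the authors advertise the bijection as giving an \emph{independent} proof that $|\uIPFn|$ is the $n$th Fubini number, so checking both compositions constructively yields that enumeration as output rather than consuming it as input.
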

 \begin{proof}
 
Let $\alpha\in\uIPFn$ with parking outcome $\sigma^{-1}=(s_1,\dots, s_n) $ and break points $b_1,\dots,b_k$. By Theorem \ref{thm: upf decomposition} the parking order of $\alpha$, denoted $\alpha'$, has unique prime decomposition $P_1|\dots|P_k$ where $P_i=(1,1,\dots, b_i-b_{i-1}-1)$, where we let $b_0=0$. Considering $\psi\circ \phi(\alpha)$ we have 
\[\phi(\alpha)=\sigma(1),\dots,\sigma(b_1)/\sigma(b_1+1),\dots,\sigma(b_2)/\dots/\sigma(b_{k-1}+1),\dots, \sigma(b_k).\]
Then $\psi(\phi(\alpha))$ is $\sigma(P_1|\dots|P_k)$. By the uniqueness of the prime parking decomposition, $\sigma(P_1|\dots|P_k)=\alpha$.

Conversely, suppose $B_1/B_2/\dots/B_k\in \T_n$ consider $\phi\circ \psi(B_1/B_2/\dots/B_k)$. 
We have $\psi(B_1/B_2/\dots/B_k)=\sigma(P_1|P_2|\dots|P_k)$, where $P_i=(1,1,2,\dots,|B_i|-1)$ so that the parking function has break points $b_1,b_2,\dots, b_k$, where $b_i=|B_1|+|B_2|+\dots+|B_i|$. Hence, 
\begin{align*}
\phi(\psi(\alpha))&=\sigma(1),\dots,\sigma(b_1)/\sigma(b_1+1),\dots,\sigma(b_2)/\dots/\sigma(b_{k-1}+1),\dots, \sigma(b_k)\\
    &=B_1/B_2/\dots/B_k.
\end{align*}
Therefore, the functions $\psi$ and $\phi$ are inverses of each other, which establishes a bijection between $\T_n$ and $\uIPFn$. 
 \end{proof}

\begin{remark}[Fubini Rankings]\label{rem: fubini ranking explanation}
With slight modification to Definition \ref{def:T_n to UPFn}, we can construct an explicit bijection from faces of the permutohedron to Fubini rankings (weak orders), as well. Instead of associating to a block the distinguished prime unit-interval parking function from Lemma \ref{lem: unique_prime_unit_interval_pf}, we may use the all $1$'s vector of the appropriate length. Then, using analogs for breakpoints and parking outcomes for Fubini rankings, the inverse map would also behave similarly. This would establish the bijection remarked on in \cite{ovchinnikov2004weak} between $k$-weak orders and $(n-k)$-faces of the permutohedron. 
    
\end{remark}

 \begin{remark}
    
We may depict an ordered set partition $B_1/B_2/\dots/B_k$ as an $(n-k)\times n$ grid with squares filled in at row $i$ and column $j$ whenever $j\in B_i$. The ordered set partition and the corresponding labeled Dyck path for the associated unit-interval parking function from Example \ref{ex: bijection} are illustrated in Figure \ref{figure: grid}.
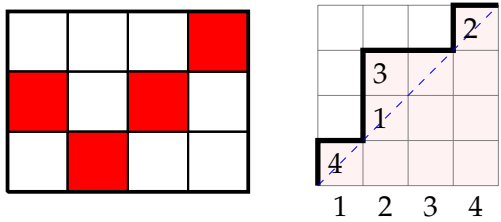
\begin{figure}[H] 
\centering
\begin{tikzpicture}[box/.style={rectangle,draw=black,thick, minimum size=1cm}, scale = 0.8, every node/.style={scale=0.8}, baseline = -0.05cm]

\foreach \x in {1, 2, 3, 4}{
    \foreach \y in {1, 2, 3}
        \node[box] at (\x,\y){};
}

\node[box,fill= red] at (1, 2){};  
\node[box,fill= red] at (2, 1){};  
\node[box,fill= red] at (3, 2){};
\node[box,fill= red] at (4, 3){};
\draw[line width = 0.5mm] (.5, .5) -- (4.5, .5) -- (4.5, 3.5) -- (.5, 3.5) -- (.5, .5);

\end{tikzpicture}
\qquad
\begin{tikzpicture}[scale = 0.6]
  \dyckpath{0, 0}{4}{1, 0, 1, 1, 0, 0, 1, 0}{dyck}{4};
  \draw[color = blue, dashed] (0, 0) -- (4, 4);
  \node[label=left:$4$] (b1) at (1, 0.5) {};
  \node[label=left:$1$] (b2) at (2, 1.5) {};
  \node[label=left:$3$] (b3) at (2, 2.5) {};
  \node[label=left:$2$] (b4) at (4, 3.5) {};
\end{tikzpicture}
\caption{Ordered set partition $4/13/2$ and the corresponding labeled Dyck path for parking function $(2, 4, 2, 1)$.}
\label{figure: grid}
\end{figure}

\end{remark}

\begin{theorem}\label{thm: pf to pphedron}
For all integers $0\leq k\leq n$, there exists a bijection between the $k$-dimensional faces of the permutohedron of order $n$ and unit-interval parking functions of length $n$ with total displacement $k$.
\end{theorem}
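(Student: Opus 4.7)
The plan is to compose two bijections: the classical bijection between faces of the permutohedron $P(n)$ and ordered set partitions of $[n]$, with the bijection $\psi:\T_n \to \uIPFn$ just established. As noted at the start of Section~\ref{sec:permutohedron}, the $(n-j)$-dimensional faces of $P(n)$ are in bijection with ordered set partitions of $[n]$ into $j$ blocks. So it suffices to show that $\psi$ restricts to a bijection between ordered set partitions with exactly $j$ blocks and $\uIPF{n}{n-j}$, and then to substitute $j = n-k$.

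The key step is the displacement computation. Given an ordered set partition $B_1/B_2/\cdots/B_j$, Definition~\ref{def:T_n to UPFn} constructs $\psi(B_1/\cdots/B_j) = \sigma(P_1|P_2|\cdots|P_j)$ where each $P_i = (1,1,2,\dots,|B_i|-1)$ is the unique unit-interval prime of length $|B_i|$ supplied by Lemma~\ref{lem: unique_prime_unit_interval_pf}. The corollary immediately after Lemma~\ref{lem: unique_prime_unit_interval_pf} tells us that a unit-interval prime of length $r$ has displacement $r-1$ (including the degenerate case $r=1$, which contributes $0$). The displacement of the pipe is additive, so
\[
D(P_1|P_2|\cdots|P_j) \;=\; \sum_{i=1}^{j}(|B_i|-1) \;=\; n-j.
\]
Applying the coordinate permutation $\sigma$ only permutes the entries of the displacement vector (as in the proof of Lemma~\ref{lem: pp_dp_preserving}), so it preserves the total displacement. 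Hence $D(\psi(B_1/\cdots/B_j)) = n-j$.

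In the reverse direction, I would observe that $\phi$ sends $\alpha \in \uIPF{n}{k}$ to an ordered set partition whose number of blocks equals the number of breakpoints of $\alpha$. By Theorem~\ref{thm: upf decomposition} and the preceding corollary, a unit-interval parking function of length $n$ with $j$ breakpoints has component primes of sizes summing to $n$ and total displacement $n-j$; setting this equal to $k$ forces $j = n-k$. Thus $\phi$ restricts to a bijection $\uIPF{n}{k} \to \{B_1/\cdots/B_{n-k} \in \T_n\}$.

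There is no real obstacle here: the only bookkeeping is reconciling the two dual conventions (dimension $k$ of a face versus $j = n-k$ blocks of a set partition). Composing $\phi$ with the classical bijection between ordered set partitions with $n-k$ blocks and $k$-dimensional faces of $P(n)$ produces the desired bijection, completing the proof of Theorem~\ref{thm: pf to pphedron}.
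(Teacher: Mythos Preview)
Your proposal is correct and follows essentially the same approach as the paper: compose the bijection $\psi$ (equivalently $\phi^{-1}$) with the classical correspondence between $k$-faces of $P(n)$ and ordered set partitions of $[n]$ into $n-k$ blocks. The paper's own proof is a terse two-sentence version of exactly this argument, and your displacement computation $D(\psi(B_1/\cdots/B_j))=\sum_i(|B_i|-1)=n-j$ simply makes explicit what the paper leaves implicit from the preceding results.
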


\begin{proof}

The map $\psi$ establishes the bijective correspondence between unit-interval parking functions with displacement $k$ to ordered set partitions with $n-k$ parts. This set is in natural bijection  with the $k$-faces of the permutohedron of order $n$, as shown in \cite{MartinAlgebraicCombinatorics,oldbook}.
\end{proof}

It is well-known that in the permutohedron of order $n$, denoted $P(n)$, the combinatorial type of each face is determined by the sizes of the blocks in the corresponding ordered set partition. Thus, Theorem~\ref{thm: pf to pphedron} immediately implies the following result. \begin{corollary}Each unit-interval parking function of length $n$ with component primes of size $n_1,\dots,n_k$, corresponds to a unique $(n-k)$-face of combinatorial type $P(n_1)\times\dots \times P(n_k)$ in $P(n)$, the permutohedron order $n$. 
\end{corollary}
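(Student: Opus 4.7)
The plan is to chain together two ingredients: the bijection $\phi\colon \UIPF_n \to \T_n$ already established in the proof of Theorem~\ref{thm: pf to pphedron}, together with the classical description of the face lattice of the permutohedron via ordered set partitions. Under that classical correspondence (as cited from \cite{MartinAlgebraicCombinatorics, oldbook}), the face of $P(n)$ indexed by an ordered set partition $B_1/\dots/B_k$ of $[n]$ is affinely isomorphic to the product of permutohedra $P(|B_1|)\times\cdots\times P(|B_k|)$ and has dimension $n-k$. So the task reduces to matching the block sizes of $\phi(\alpha)$ with the component-prime sizes $n_1,\ldots,n_k$ of $\alpha$.

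First I would compute $D(\alpha)$ directly: by Lemma~\ref{lem: unique_prime_unit_interval_pf}, the prime unit-interval parking function of length $n_i$ displaces exactly $n_i-1$ cars by one unit, so $D(\alpha) = \sum_{i=1}^k (n_i-1) = n - k$. By Theorem~\ref{thm: pf to pphedron}, $\alpha$ thus corresponds to a unique face of dimension $n-k$, which already matches the expected dimension of $P(n_1)\times\cdots\times P(n_k)$.

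Next, I would verify that the block sizes of $\phi(\alpha)$ are exactly $n_1,\ldots,n_k$. Following Definition~\ref{def: UPFn to Tn}, $\phi(\alpha)$ is built by cutting the one-line notation of the parking outcome $\sigma$ at the breakpoints $b_1<\cdots<b_k$ of $\alpha$. The differences $b_i - b_{i-1}$ (with $b_0 = 0$) are precisely the lengths of the component primes in the parking rearrangement of $\alpha$, by Remark~\ref{prime decomp for Unit} and Theorem~\ref{thm: upf decomposition}. Hence $|B_i| = n_i$ for every $i$, and combining this with the classical fact above gives the combinatorial type $P(n_1)\times\cdots\times P(n_k)$, with uniqueness coming from bijectivity of $\phi$.

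I do not foresee a genuine obstacle: the corollary reduces to checking that $\phi$ preserves the size data attached to each prime component, after which everything else is packaged in Theorem~\ref{thm: pf to pphedron} and the cited face description of $P(n)$. The one point worth stating carefully is why the ordered tuple $(n_1,\ldots,n_k)$ is well-defined, namely because both the prime decomposition and the ordered set partition inherit the natural left-to-right order from the parking rearrangement.
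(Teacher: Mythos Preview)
Your proposal is correct and follows essentially the same approach as the paper: invoke the bijection of Theorem~\ref{thm: pf to pphedron} together with the well-known fact that the combinatorial type of a face of $P(n)$ is determined by the block sizes of the corresponding ordered set partition. The paper treats this as an immediate consequence without spelling out the verification that block sizes match component-prime sizes, whereas you make that step explicit; your added detail is accurate and in line with how $\phi$ is defined.
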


We illustrate these results by considering the permutohedron of order 4.
\begin{example}[Permutohedron of order 4]
In Figure \ref{fig: 4-permutohedron}, the vertices of $P(4)$ are precisely the unit-interval parking functions of length 4 with displacement $0$, i.e. the permutations of $[4]$.
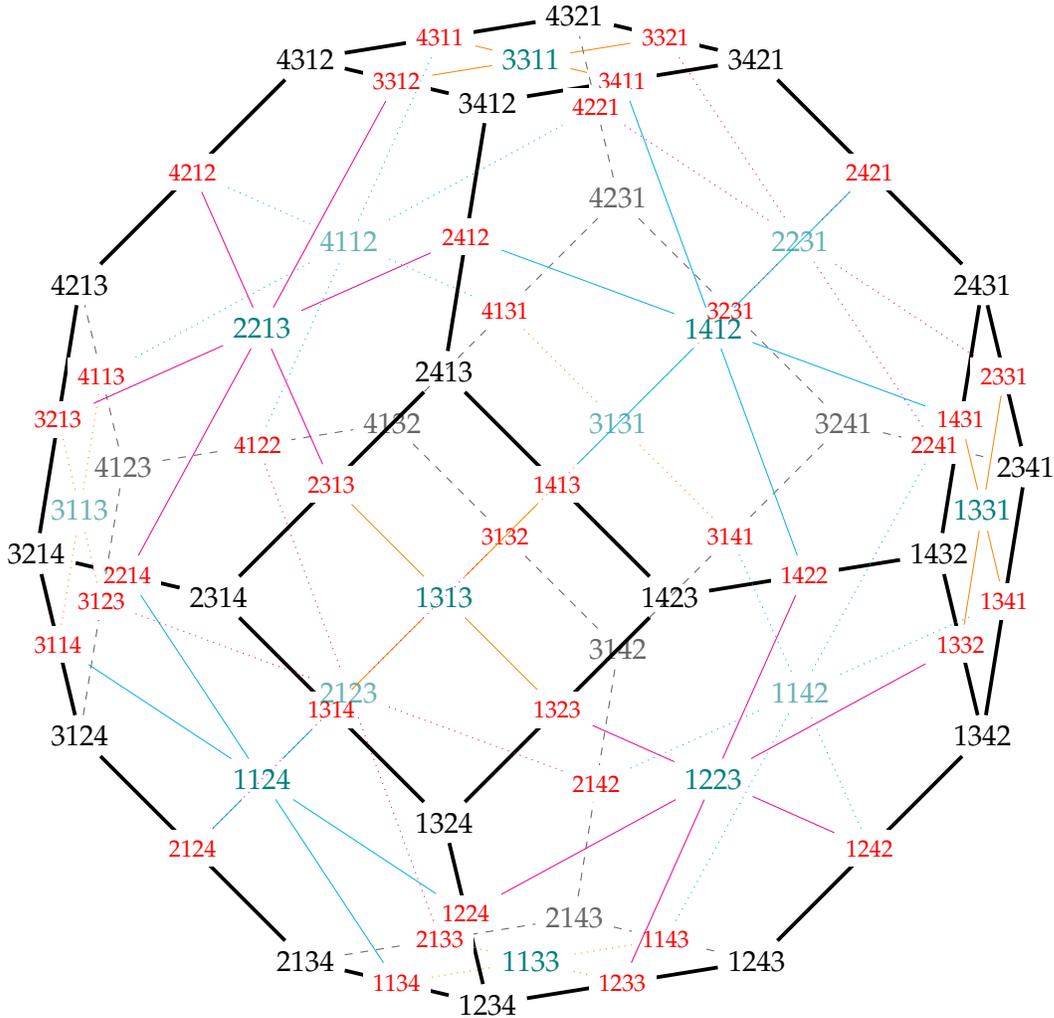
\begin{figure}
\centering
\begin{tikzpicture}[z={(0,0,.5)}, line join=round, scale = 3]


\node[fill = white] at (0, 1, 2) (2413) {$2413$};
\node at (0, -1, 2) (1324) {$1324$};
\node[opacity = 0.6] at (0, 1, -2) (4231) {$4231$};
\node[opacity = 0.6] at (0, -1, -2) (3142) {$3142$};

\node at (0, 2, 1) (3412) {$3412$};
\node at (0, -2, 1) (1234) {$1234$};
\node at (0, 2, -1) (4321) {$4321$};
\node[opacity = 0.6] at (0, -2, -1) (2143) {$2143$};

\node at (1, 0, 2) (1423) {$1423$};
\node at (-1, 0, 2) (2314) {$2314$};
\node[opacity = 0.6] at (1, 0, -2) (3241) {$3241$};
\node[opacity = 0.6] at (-1, 0, -2) (4132) {$4132$};

\node at (1, 2, 0) (3421) {$3421$};
\node at (-1, 2, 0) (4312) {$4312$};
\node at (1, -2, 0) (1243) {$1243$};
\node at (-1, -2, 0) (2134) {$2134$};

\node at (2, 0, 1) (1432) {$1432$};
\node at (-2, 0, 1) (3214) {$3214$};
\node at (2, 0, -1) (2341) {$2341$};
\node[opacity = 0.6] at (-2, 0, -1) (4123) {$4123$};

\node at (2, 1, 0) (2431) {$2431$};
\node at (-2, 1, 0) (4213) {$4213$};
\node at (2, -1, 0) (1342) {$1342$};
\node at (-2, -1, 0) (3124) {$3124$};

\draw[line width = 0.5mm] (1243) -- (1234) node [color = red, midway, fill=white] (1233) {\footnotesize $1233$} -- (2134) node [color = red, midway, fill=white] (1134) {\footnotesize $1134$};

\draw[line width = 0.5mm] (2134) -- (3124) node [color = red, midway, fill=white] (2124) {\footnotesize $2124$} -- (3214) node [color = red, midway, fill=white] (3114) {\footnotesize $3114$} -- (2314) node [color = red, midway, fill=white] (2214) {\footnotesize $2214$} -- (1324) node [color = red, midway, fill=white] (1314) {\footnotesize $1314$} -- (1234) node [color = red, midway, fill=white] (1224) {\footnotesize $1224$};

\draw[line width = 0.5mm] (1243) -- (1342) node [color = red, midway, fill=white] (1242) {\footnotesize $1242$} -- (1432) node [color = red, midway, fill=white] (1332) {\footnotesize $1332$} -- (1423) node [color = red, midway, fill=white] (1422) {\footnotesize $1422$} -- (1324) node [color = red, midway, fill=white] (1323) {\footnotesize $1323$};

\draw[line width = 0.5mm] (2314) -- (2413) node [color = red, midway, fill=white] (2313) {\footnotesize $2313$} -- (1423) node [color = red, midway, fill=white] (1413) {\footnotesize $1413$};

\draw[line width = 0.5mm] (4312) -- (4321) node [color = red, midway, fill=white] (4311) {\footnotesize $4311$} -- (3421) node [color = red, midway, fill=white] (3321) {\footnotesize $3321$} -- (3412) node [color = red, midway, fill=white] (3411) {\footnotesize $3411$} -- (4312) node [color = red, midway, fill=white] (3312) {\footnotesize $3312$};

\draw[line width = 0.5mm] (3214) -- (4213) node [color = red, midway, fill=white] (3213) {\footnotesize $3213$} -- (4312) node [color = red, midway, fill=white] (4212) {\footnotesize $4212$};

\draw[line width = 0.5mm] (3412) -- (2413) node [color = red, midway, fill=white] (2412) {\footnotesize $2412$};

\draw[line width = 0.5mm] (3421) -- (2431) node [color = red, midway, fill=white] (2421) {\footnotesize $2421$}-- (1432) node [color = red, midway, fill=white] (1431) {\footnotesize $1431$};

\draw[line width = 0.5mm] (2431) -- (2341) node [color = red, midway, fill=white] (2331) {\footnotesize $2331$} -- (1342) node [color = red, midway, fill=white] (1341) {\footnotesize $1341$};

\draw[dashed, opacity = 0.6] (2134) -- (2143) node [color = red, fill = white, text opacity = 0.6, fill opacity = 1, midway] (2133) {\footnotesize $2133$} -- (1243) node [text opacity = 0.6, fill opacity = 1,, color = red, midway, fill=white] (1143) {\footnotesize $1143$};

\draw[dashed, opacity = 0.6] (3124) -- (4123) node [text opacity = 0.6, fill opacity = 1, color = red, midway, fill=white] (3123) {\footnotesize $3123$} -- (4213) node [text opacity = 0.6, fill opacity = 1, color = red, midway, fill=white] (4113) {\footnotesize $4113$};

\draw[dashed, opacity = 0.6] (4123) -- (4132) node [text opacity = 0.6, fill opacity = 1, color = red, midway, fill=white] (4122) {\footnotesize $4122$} -- (3142) node [text opacity = 0.6, fill opacity = 1, color = red, midway, fill=white] (3132) {\footnotesize $3132$} -- (2143) node [text opacity = 0.6, fill opacity = 1, color = red, midway, fill=white] (2142) {\footnotesize $2142$};

\draw[dashed, opacity = 0.6] (4132) -- (4231) node [text opacity = 0.6, fill opacity = 1, color = red, midway, fill=white] (4131) {\footnotesize $4131$} -- (3241) node [text opacity = 0.6, fill opacity = 1, color = red, midway, fill=white] (3231) {\footnotesize $3231$} -- (3142) node [text opacity = 0.6, fill opacity = 1, color = red, midway, fill=white] (3141) {\footnotesize $3141$};

\draw[dashed, opacity = 0.6] (4231) -- (4321) node [text opacity = 0.6, fill opacity = 1, color = red, midway, fill=white] (4221) {\footnotesize $4221$};

\draw[dashed, opacity = 0.6] (3241) -- (2341) node [text opacity = 0.6, fill opacity = 1, color = red, midway, fill=white] (2241){\footnotesize $2241$};

\node[color = teal] at (0, 0, 2) (1313) {$1313$};
\node[color = teal, opacity = 0.6] at (0, 0, -2) (3131) {$3131$};
\node[color = teal] at (0, 2, 0) (3311) {$3311$};
\node[color = teal] at (0, -2, 0) (1133) {$1133$};
\node[color = teal] at (2, 0, 0) (1331) {$1331$};
\node[color = teal, opacity = 0.6] at (-2, 0, 0) (3113) {$3113$};

\node[color = teal] at (1, 1, 1) (1412) {$1412$};
\node[color = teal, opacity = 0.6] at (1, 1, -1) (2231) {$2231$};
\node[color = teal] at (1, -1, 1) (1223) {$1223$};
\node[color = teal, opacity = 0.6] at (1, -1, -1) (1142) {$1142$};
\node[color = teal] at (-1, 1, 1) (2213) {$2213$};
\node[color = teal, opacity = 0.6] at (-1, 1, -1) (4112) {$4112$};
\node[color = teal] at (-1, -1, 1) (1124) {$1124$};
\node[color = teal, opacity = 0.6] at (-1, -1, -1) (2123) {$2123$};

\draw[color = orange, line width = 0.05mm] (2313) -- (1313);
\draw[color = orange, line width = 0.05mm] (1413) -- (1313);
\draw[color = orange, line width = 0.05mm] (1323) -- (1313);
\draw[color = orange, line width = 0.05mm] (1314) -- (1313);

\draw[color = orange, line width = 0.05mm] (3312) -- (3311);
\draw[color = orange, line width = 0.05mm] (4311) -- (3311);
\draw[color = orange, line width = 0.05mm] (3321) -- (3311);
\draw[color = orange, line width = 0.05mm] (3411) -- (3311);

\draw[color = orange, line width = 0.05mm] (1431) -- (1331);
\draw[color = orange, line width = 0.05mm] (2331) -- (1331);
\draw[color = orange, line width = 0.05mm] (1341) -- (1331);
\draw[color = orange, line width = 0.05mm] (1332) -- (1331);

\draw[color = orange, dotted] (4131) -- (3131);
\draw[color = orange, dotted] (3231) -- (3131);
\draw[color = orange, dotted] (3141) -- (3131);
\draw[color = orange, dotted] (3132) -- (3131);

\draw[color = orange, dotted] (3213) -- (3113);
\draw[color = orange, dotted] (3114) -- (3113);
\draw[color = orange, dotted] (3123) -- (3113);
\draw[color = orange, dotted] (4113) -- (3113);

\draw[color = orange, dotted] (1134) -- (1133);
\draw[color = orange, dotted] (2133) -- (1133);
\draw[color = orange, dotted] (1143) -- (1133);
\draw[color = orange, dotted] (1233) -- (1133);


\draw[color = cyan, line width = 0.05mm] (2124) -- (1124);
\draw[color = cyan, line width = 0.05mm] (3114) -- (1124);
\draw[color = cyan, line width = 0.05mm] (2214) -- (1124);
\draw[color = cyan, line width = 0.05mm] (1314) -- (1124);
\draw[color = cyan, line width = 0.05mm] (1224) -- (1124);
\draw[color = cyan, line width = 0.05mm] (1134) -- (1124);

\draw[color = cyan, line width = 0.05mm] (1422) -- (1412);
\draw[color = cyan, line width = 0.05mm] (1413) -- (1412);
\draw[color = cyan, line width = 0.05mm] (2412) -- (1412);
\draw[color = cyan, line width = 0.05mm] (3411) -- (1412);
\draw[color = cyan, line width = 0.05mm] (2421) -- (1412);
\draw[color = cyan, line width = 0.05mm] (1431) -- (1412);

\draw[color = magenta, line width = 0.05mm] (1233) -- (1223);
\draw[color = magenta, line width = 0.05mm] (1224) -- (1223);
\draw[color = magenta, line width = 0.05mm] (1323) -- (1223);
\draw[color = magenta, line width = 0.05mm] (1422) -- (1223);
\draw[color = magenta, line width = 0.05mm] (1332) -- (1223);
\draw[color = magenta, line width = 0.05mm] (1242) -- (1223);

\draw[color = magenta, line width = 0.05mm] (2214) -- (2213);
\draw[color = magenta, line width = 0.05mm] (3213) -- (2213);
\draw[color = magenta, line width = 0.05mm] (4212) -- (2213);
\draw[color = magenta, line width = 0.05mm] (3312) -- (2213);
\draw[color = magenta, line width = 0.05mm] (2412) -- (2213);
\draw[color = magenta, line width = 0.05mm] (2313) -- (2213);

\draw[color = cyan, dotted] (1143) -- (1142);
\draw[color = cyan, dotted] (2142) -- (1142);
\draw[color = cyan, dotted] (3141) -- (1142);
\draw[color = cyan, dotted] (2241) -- (1142);
\draw[color = cyan, dotted] (1341) -- (1142);
\draw[color = cyan, dotted] (1242) -- (1142);

\draw[color = cyan, dotted] (4122) -- (4112);
\draw[color = cyan, dotted] (4113) -- (4112);
\draw[color = cyan, dotted] (4212) -- (4112);
\draw[color = cyan, dotted] (4311) -- (4112);
\draw[color = cyan, dotted] (4221) -- (4112);
\draw[color = cyan, dotted] (4131) -- (4112);

\draw[color = magenta, dotted] (2133) -- (2123);
\draw[color = magenta, dotted] (2124) -- (2123);
\draw[color = magenta, dotted] (3123) -- (2123);
\draw[color = magenta, dotted] (4122) -- (2123);
\draw[color = magenta, dotted] (3132) -- (2123);
\draw[color = magenta, dotted] (2142) -- (2123);

\draw[color = magenta, dotted] (3231) -- (2231);
\draw[color = magenta, dotted] (4221) -- (2231);
\draw[color = magenta, dotted] (3321) -- (2231);
\draw[color = magenta, dotted] (2421) -- (2231);
\draw[color = magenta, dotted] (2331) -- (2231);
\draw[color = magenta, dotted] (2241) -- (2231);

\end{tikzpicture}
\caption{A permutohedron of order $4$ with parking functions labeling all $k$-faces.}
\label{fig: 4-permutohedron}
\end{figure}

An edge of $P(4)$ has combinatorial type corresponding to a product $P(1)\times P(1)\times P(2)$. Accordingly every edge can be uniquely labeled with one of the parking functions of length $4$ with displacement $1$.

However, something interesting happens at the $2$-faces: we see two different shapes. On one hand the face $2213$ corresponds to a face with combinatorial type $P(1)\times P(3)$, which is hexagonal face. On the other, $1313$ corresponds to a face with combinatorial type $P(2)\times P(2)$. The only unit-interval parking functions with displacement partition $(1,1)$ consist of either a single nontrivial prime parking function of length three or two nontrivial prime parking functions, each of length two.

\end{example}

Recall, the symmetric group acts simply transitively on the vertices of the permutohedron, but does not act transitively nor freely on the set of $k$-faces. 
Using the bijection from Theorem \ref{thm: pf to pphedron}, we can cleanly illuminate the action on the faces of the permutohedron through the lens of unit-interval parking functions. 
In particular, Theorem~\ref{thm: pf to pphedron} induces an action for the symmetric group on $\uIPFn$. 
Moreover, the known properties for the action on the permutohedron translated into unit-interval parking functions, carry along numerical results.

Observe that under this action, a permutation $\sigma$ fixes a face when the set of vertices of that face is invariant under $
\sigma$. Since each face can be thought of as a product of lower order permutohedra, this means the stabilizer of any face is isomorphic to the product of the stabilizers of the factors.  Since the prime decomposition of a unit-interval parking function determines the combinatorial type of a face,
the stabilizer of any face is determined by partitioning the label's preferences into its prime components. The permutations that only permute indices within primes will fix the face. Before we formalize this notion in Proposition \ref{prop:Stabilizer} below, we present the following technical result. 
\begin{lemma}
Two faces of $P(n)$ are in the same orbit if and only if their corresponding parking function labels have the same ordered prime decomposition.
\end{lemma}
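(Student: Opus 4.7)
The plan is to transport the $Sym_n$-action on $k$-faces of $P(n)$ through the bijection $\psi$ of Theorem~\ref{thm: pf to pphedron} to an action on $\T_n$, and then to observe that the ordered prime decomposition of $\psi(B_1/\cdots/B_k)$ depends only on the sequence of block sizes.

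First I would invoke the standard description of the $Sym_n$-action on the permutohedron: under the correspondence between faces of $P(n)$ and ordered set partitions of $[n]$, the permutation $\tau\in Sym_n$ sends the face labeled by $B_1/B_2/\cdots/B_k$ to the face labeled by $\tau(B_1)/\tau(B_2)/\cdots/\tau(B_k)$, where $\tau(B_i)=\{\tau(b):b\in B_i\}$ is rewritten in increasing order. Using this description, I would characterize the orbits: two ordered set partitions $B_1/\cdots/B_k$ and $C_1/\cdots/C_\ell$ lie in the same $Sym_n$-orbit if and only if $k=\ell$ and $|B_i|=|C_i|$ for every $i$. The forward direction is immediate since permutations preserve cardinality; for the converse, the permutation sending the $j$-th smallest element of $B_i$ to the $j$-th smallest element of $C_i$ is an element of $Sym_n$ mapping one ordered set partition to the other.

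Next I would apply Definition~\ref{def:T_n to UPFn} together with Lemma~\ref{lem: unique_prime_unit_interval_pf}: since the unique prime unit-interval parking function of length $r$ is $(1,1,2,\ldots,r-1)$, the ordered prime decomposition of $\psi(B_1/\cdots/B_k)$ is determined entirely by the sequence of block sizes $(|B_1|,\ldots,|B_k|)$, and conversely that sequence can be recovered from the ordered prime decomposition. Combining this observation with the orbit characterization from the previous paragraph shows that two faces share an orbit if and only if their parking function labels share the same ordered prime decomposition. The main point requiring care is the blockwise description of the $Sym_n$-action on faces of $P(n)$ in terms of ordered set partitions; once this is in hand, the rest of the argument is a direct combination of earlier results.
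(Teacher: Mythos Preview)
Your proposal is correct and follows essentially the same approach as the paper: both begin by identifying orbits of faces with ordered set partitions having the same sequence of block sizes, and then link this sequence to the ordered prime decomposition via the bijection $\psi$. The only minor difference is that the paper passes through the intermediate observation that such parking functions have the same multiset of preferences (hence the same nondecreasing rearrangement, which by Remark~\ref{rem: pp_order_unit_interval} equals the parking rearrangement), whereas you go directly from block sizes to the prime decomposition using Definition~\ref{def:T_n to UPFn} and Lemma~\ref{lem: unique_prime_unit_interval_pf}; these are equivalent routes to the same conclusion.
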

\begin{proof}First observe that two faces of $P(n)$ are in the same orbit if and only if their corresponding ordered set partitions have blocks of the same size in the same order. Applying the bijection from  Theorem~\ref{thm: pf to pphedron}, the parking functions have the same preferences. Two parking functions have the same preferences if and only if they have the same ascending order. By Remark \ref{rem: pp_order_unit_interval}, unit-interval parking functions have the same ascending order if and only if they have the same parking order, which determines the ordered prime decomposition. This gives our characterization of the orbits of faces of the permutohedron under the symmetric group action.
\end{proof}

The following proposition follows directly from the bijection between the $k$-dimensional faces of the permutohedron of order $n$ and unit-interval parking functions of length $n$ with total displacement $k$.
\begin{proposition}\label{prop:Stabilizer}
Let $\alpha$ be a unit-interval parking function that decomposes into $k$ primes, let $e(\alpha_{i,j})$ be the entry of the $i$th preference of the $j$th prime in $\alpha$'s prime decomposition, and let $\ell(j)$ be the length of the $j$th prime. 
Then the transpositions of the form $(e(\alpha_{i,j}),e(\alpha_{i+1,j}))$ for $j \in [k]$  and $i \in [\ell(j)-1]$ generate the stabilizer of the face that $\alpha$ labels, and the size of the stabilizer of that face is $\prod_{j\in [k]} \ell(j)!$.
\end{proposition}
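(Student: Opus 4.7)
The plan is to transport the claim through the bijection $\psi$ of Theorem~\ref{thm: pf to pphedron} into an assertion about the Young subgroup stabilizing an ordered set partition, and then to invoke the standard Coxeter-style generation of symmetric groups. Let $\phi(\alpha) = B_1/B_2/\cdots/B_k$ be the ordered set partition associated with $\alpha$, so the face that $\alpha$ labels corresponds to this partition. Unpacking Definition~\ref{def: UPFn to Tn}, if $\sigma^{-1}$ is the parking outcome of $\alpha$ and $0 = b_0 < b_1 < \cdots < b_k = n$ are its breakpoints, then $B_j = \{\sigma(b_{j-1}+1), \sigma(b_{j-1}+2), \ldots, \sigma(b_j)\}$; equivalently, $B_j = \{e(\alpha_{1,j}), e(\alpha_{2,j}), \ldots, e(\alpha_{\ell(j),j})\}$, since $\sigma(b_{j-1}+i)$ is, by construction, the index of the $i$-th car in the $j$-th prime.

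With this identification, I would next determine the stabilizer of the face. Under the standard $S_n$-action on $P(n)$ permuting coordinate values, a face corresponding to $B_1/\cdots/B_k$ is fixed by $\tau \in S_n$ if and only if $\tau(B_j) = B_j$ for every $j \in [k]$. Hence the stabilizer is the Young subgroup $S_{B_1} \times S_{B_2} \times \cdots \times S_{B_k}$, which has order $\prod_{j\in[k]} |B_j|! = \prod_{j\in[k]} \ell(j)!$, establishing the size claim. Each listed transposition $(e(\alpha_{i,j}), e(\alpha_{i+1,j}))$ swaps two elements of $B_j$, so it sits in $S_{B_j}$ and therefore in the stabilizer. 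To show these transpositions generate the entire Young subgroup, I would invoke the standard fact that for any enumeration $x_1, x_2, \ldots, x_m$ of a finite set, the consecutive transpositions $(x_1, x_2), (x_2, x_3), \ldots, (x_{m-1}, x_m)$ generate the symmetric group on $\{x_1, \ldots, x_m\}$. Applying this to the enumeration $e(\alpha_{1,j}), \ldots, e(\alpha_{\ell(j),j})$ of $B_j$ for each $j$ generates each factor $S_{B_j}$, and since the factors have pairwise disjoint supports, their union generates the full direct product.

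The main obstacle is not a deep technical one but rather careful bookkeeping: one must unwind Definition~\ref{def: UPFn to Tn} to verify that $e(\alpha_{i,j})$, interpreted as the index of the $i$-th car in the $j$-th prime, is precisely $\sigma(b_{j-1}+i)$, and hence that the values $e(\alpha_{i,j})$ for $i \in [\ell(j)]$ exhaust $B_j$. Once this correspondence is secured, both the generation statement and the size computation reduce immediately to familiar facts about Young subgroups and the adjacent-transposition generation of symmetric groups.
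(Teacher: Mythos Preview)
Your proposal is correct and follows the same route the paper takes: the paper simply asserts that the proposition ``follows directly from the bijection'' of Theorem~\ref{thm: pf to pphedron}, having observed in the preceding paragraph that a face is a product of lower-order permutohedra whose stabilizer is the product of the factor stabilizers, i.e.\ the Young subgroup on the prime components. Your write-up supplies the bookkeeping (identifying $e(\alpha_{i,j})$ with $\sigma(b_{j-1}+i)$ so that the $B_j$ are exactly the index sets of the primes) and the standard adjacent-transposition generation of each $S_{B_j}$, which the paper leaves implicit.
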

From the orbit-stabilizer theorem we can directly compute the size each orbit in terms of the prime parking decomposition of a unit-interval parking function.

\begin{corollary}\label{cor: orbit}
Let $\alpha$ be defined in Proposition \ref{prop:Stabilizer}.
   The size of the orbit of each unit-interval parking function is \[ \frac{n!}{\prod_{j\in [k]} \ell(j)!}.\]
    
\end{corollary}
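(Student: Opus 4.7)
The plan is to apply the orbit-stabilizer theorem directly. The $S_n$ action on the $k$-dimensional faces of the permutohedron $P(n)$ transfers, via the bijection of Theorem~\ref{thm: pf to pphedron}, to an action on unit-interval parking functions, and Proposition~\ref{prop:Stabilizer} has already computed the stabilizer of a given face, so the remaining calculation is essentially one line.

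Concretely, I would proceed in a single step: since the acting group is $S_n$ with $|S_n| = n!$, and Proposition~\ref{prop:Stabilizer} gives $|\mathrm{Stab}(\alpha)| = \prod_{j \in [k]} \ell(j)!$, the orbit-stabilizer theorem yields
\[
|\mathrm{orbit}(\alpha)| \;=\; \frac{|S_n|}{|\mathrm{Stab}(\alpha)|} \;=\; \frac{n!}{\prod_{j \in [k]} \ell(j)!},
\]
which is exactly the claim.

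Since Corollary~\ref{cor: orbit} reduces to a single application of orbit-stabilizer, the main work has already been done in Proposition~\ref{prop:Stabilizer}, where the stabilizer had to be correctly identified with permutations that act only within individual prime component blocks. As a consistency check, I would observe that the orbits of the action are indexed by the ordered compositions $(\ell(1),\ldots,\ell(k))$ of $n$ (by the lemma preceding Proposition~\ref{prop:Stabilizer}), and that summing the resulting multinomial coefficients $\binom{n}{\ell(1),\ldots,\ell(k)}$ over all such compositions recovers the Fubini number $|\uIPFn|$, providing an independent verification of the formula.
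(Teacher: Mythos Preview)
Your proposal is correct and matches the paper's approach exactly: the paper states the corollary as an immediate consequence of the orbit-stabilizer theorem applied to the stabilizer size computed in Proposition~\ref{prop:Stabilizer}, without a separate proof. Your additional consistency check via summing multinomial coefficients over compositions is a nice sanity check but is not needed for the argument itself.
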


\begin{figure}
\begin{tikzpicture}[scale = 4]
\node at (0.5, 0.866, 0.5) (23145) {$23145$};
\node at (1, 0.866, -0.5) (23154) {$23154$};
\node at (0.5, -0.866, 0.5) (12345) {$12345$};
\node at (1, -0.866, -0.5) (12354) {$12354$};
\node at (-0.5, 0.866, 0.5) (32145) {$32145$};
\node at (0, 0.866, -0.5) (32154) {$32154$};
\node at (-0.5, -0.866, 0.5) (21345) {$21345$};
\node[opacity = 0.6] at (0, -0.866, -0.5) (21354) {$21354$};

\node at (1, 0, 0.5) (13245) {$13245$};
\node at (1.5, 0, -0.5) (13254) {$13254$};
\node at (-1, 0, 0.5) (31245) {$31245$};
\node[opacity = 0.6] at (-.5, 0, -0.5) (31254) {$31254$};


\draw[line width = 0.5mm] (23145) -- (32145) node [color = red, midway, fill=white] (22145) {\footnotesize $22145$} -- (31245) node [color = red, midway, fill=white] (31145) {\footnotesize $31145$} -- (21345) node [color = red, midway, fill=white] (21245) {\footnotesize $21245$} -- (12345) node [color = red, midway, fill=white] (11345) {\footnotesize $11345$} -- (13245) node [color = red, near end, fill=white] (12245) {\footnotesize $12245$} -- (23145) node [color = red, near end, fill=white] (13145) {\footnotesize $13145$};

\draw[line width = 0.5mm] (23154) -- (32154) node [color = red, midway, fill=white] (22154) {\footnotesize $22154$};

\draw[dashed, opacity = 0.6] (32154) -- (31254) node [color = red, near end, fill=white, fill opacity = 1, text opacity = 0.6] (31154) {\footnotesize $31154$} -- (21354) node [color = red, near end, fill=white, fill opacity = 1, text opacity = 0.6] (21254) {\footnotesize $21254$} -- (12354) node [color = red, midway, fill=white, fill opacity = 1, text opacity = 0.6] (11354) {\footnotesize $11354$};

\draw[line width = 0.5mm] (12354) -- (13254) node [color = red, midway, fill=white] (12254) {\footnotesize $12254$} -- (23154) node [color = red, midway, fill=white] (13154) {\footnotesize $13154$};

\draw[line width = 0.5mm] (23145) -- (23154) node [color = red, midway, fill=white] (23144) {\footnotesize $23144$};

\draw[line width = 0.5mm] (32145) -- (32154) node [color = red, midway, fill=white] (32144) {\footnotesize $32144$};

\draw[dashed, opacity = 0.6] (31245) -- (31254) node [color = red, midway, fill=white, fill opacity = 1, text opacity = 0.6] (31244) {\footnotesize $31244$};

\draw[dashed, opacity = 0.6] (21345) -- (21354) node [color = red, midway, fill=white, fill opacity = 1, text opacity = 0.6] (21344) {\footnotesize $21344$};

\draw[line width = 0.5mm] (12345) -- (12354) node [color = red, midway, fill=white] (12344) {\footnotesize $12344$};

\draw[line width = 0.5mm] (13245) -- (13254) node [color = red, midway, fill=white] (13244) {\footnotesize $13244$};


\node[color = teal, fill = white] at (0, 0, 0.5) (11245) {$11245$};
\node[color = teal, opacity = 0.6] at (0.5, 0, -0.5) (11254) {$11254$};
\node[color = teal, fill = white] at (1, 0.433, 0) (13144) {$13144$};
\node[color = teal, fill = white] at (1, -0.433, 0) (12244) {$12244$};
\node[color = teal, fill = white] at (-0.5, 0.433, 0) (31144) {$31144$};
\node[color = teal, fill = white] at (-0.5, -0.433, 0) (21244) {$21244$};
\node[color = teal, fill = white] at (.25, -.866, 0) (11344) {$11344$};
\node[color = teal, fill = white] at (.25, .866, 0) (22144) {$22144$};

\draw[color = cyan] (21245) -- (11245);
\draw[color = cyan] (31145) -- (11245);
\draw[color = cyan] (22145) -- (11245);
\draw[color = cyan] (13145) -- (11245);
\draw[color = cyan] (12245) -- (11245);
\draw[color = cyan] (11345) -- (11245);

\draw[color = cyan, dotted] (13154) -- (11254);
\draw[color = cyan, dotted] (22154) -- (11254);
\draw[color = cyan, dotted] (31154) -- (11254);
\draw[color = cyan, dotted] (21254) -- (11254);
\draw[color = cyan, dotted] (11354) -- (11254);
\draw[color = cyan, dotted] (12254) -- (11254);

\draw[color = orange] (12344) -- (12244);
\draw[color = orange] (12245) -- (12244);
\draw[color = orange] (13244) -- (12244);
\draw[color = orange] (12254) -- (12244);

\draw[color = orange] (13145) -- (13144);
\draw[color = orange] (23144) -- (13144);
\draw[color = orange] (13154) -- (13144);
\draw[color = orange] (13244) -- (13144);

\draw[color = orange] (22145) -- (22144);
\draw[color = orange] (32144) -- (22144);
\draw[color = orange] (22154) -- (22144);
\draw[color = orange] (23144) -- (22144);

\draw[color = orange, dotted] (32144) -- (31144);
\draw[color = orange, dotted] (31145) -- (31144);
\draw[color = orange, dotted] (31244) -- (31144);
\draw[color = orange, dotted] (31154) -- (31144);

\draw[color = orange, dotted] (31244) -- (21244);
\draw[color = orange, dotted] (21245) -- (21244);
\draw[color = orange, dotted] (21344) -- (21244);
\draw[color = orange, dotted] (21254) -- (21244);

\draw[color = orange, dotted] (21344) -- (11344);
\draw[color = orange, dotted] (11345) -- (11344);
\draw[color = orange, dotted] (12344) -- (11344);
\draw[color = orange, dotted] (11354) -- (11344);

\node[color = purple, opacity = 0.6] at (0.25, 0, 0) (11244) {$11244$};
\draw[color = purple, dotted] (11245) -- (11244);
\draw[color = purple, dotted] (11254) -- (11244);
\draw[color = purple, dotted] (11344) -- (11244);
\draw[color = purple, dotted] (12244) -- (11244);
\draw[color = purple, dotted] (13144) -- (11244);
\draw[color = purple, dotted] (22144) -- (11244);
\draw[color = purple, dotted] (31144) -- (11244);
\draw[color = purple, dotted] (21244) -- (11244);

\end{tikzpicture}
\caption{A hexagonal prism, the facet of $P(5)$ corresponding the the parking function $11244$.}
\label{fig: hexprism}
\end{figure}
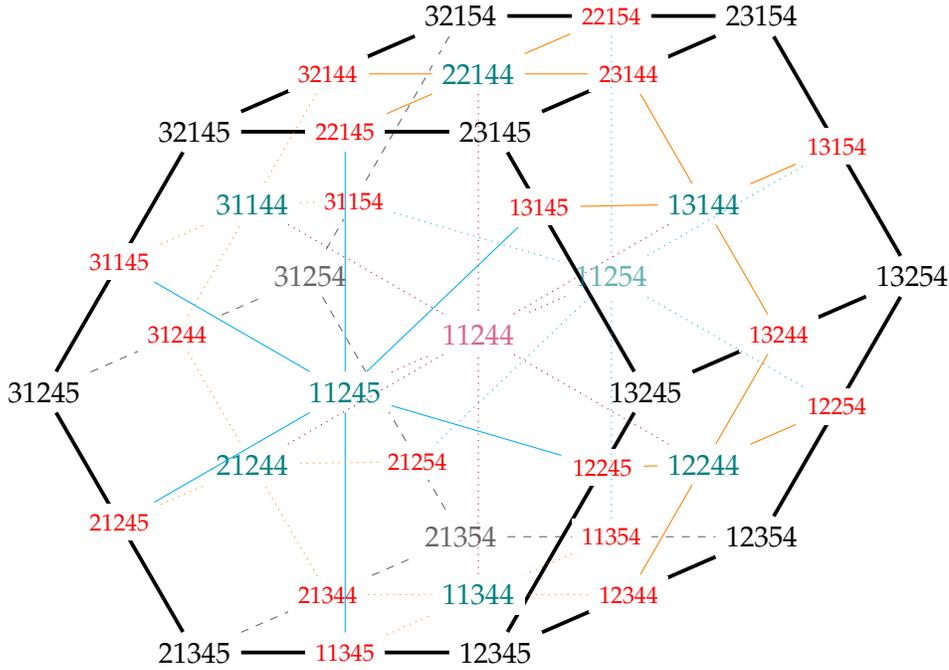
The formula in Corollary \ref{cor: orbit} counts the number of unit-interval parking functions that differ by a permutation. In a similar vein, we can enumerate the number of permutations that when applied to a fixed unit-inteval parking function result in another (not  necessarily distinct) unit-interval parking function by recognizing that a simple transposition of the first two entries in any nontrivial prime component fixes the parking function. In summary, we have the following result. 

\begin{corollary}\label{cor: perms that fix}
Let $\alpha$ be a unit-interval parking function that decomposes into $k$ primes including $t$ non-trivial primes, and let $\ell(j)$ be the length of the $j$th prime. The number of permutations that fix the displacement partition of $\alpha$ is \[\frac{2^tn!}{\prod_{j\in [k]} \ell(j)!}.\]
\end{corollary}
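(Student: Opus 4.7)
My plan is to count these permutations by partitioning them according to the image $\sigma(\alpha)$. The key reduction is that a permutation $\sigma \in S_n$ fixes the displacement partition of $\alpha$ if and only if $\sigma(\alpha) \in \uIPFn$. Indeed, since $D(\beta) = \binom{n+1}{2} - \sum_i b_i$ depends only on the multiset of preferences (and the parking-function condition itself is multiset-dependent), $\sigma(\alpha)$ is always a parking function with $D(\sigma(\alpha)) = D(\alpha)$. Because $\alpha$ is unit-interval, its displacement partition is $(1^{D(\alpha)})$, so the displacement partition of $\sigma(\alpha)$ matches precisely when every individual displacement of $\sigma(\alpha)$ is at most one, i.e., precisely when $\sigma(\alpha) \in \uIPFn$.

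Next I count the distinct UIPF images. By Remark~\ref{rem: pp_order_unit_interval}, two unit-interval parking functions with the same multiset of entries share the same parking rearrangement, and hence by Theorem~\ref{thm: upf decomposition} the same ordered prime decomposition, placing them in a common $S_n$-orbit under the action induced by Theorem~\ref{thm: pf to pphedron}. Thus the set of UIPFs that arise as coordinate rearrangements of $\alpha$ coincides with the orbit of $\alpha$, whose size by Corollary~\ref{cor: orbit} is $n!/\prod_{j\in[k]}\ell(j)!$. For each fixed $\beta$ in this orbit, the number of $\sigma \in S_n$ with $\sigma(\alpha) = \beta$ equals $\prod_v c_v!$, where $c_v$ denotes the multiplicity of the value $v$ in the tuple $\alpha$.

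Finally I evaluate $\prod_v c_v!$ using the prime decomposition. By Lemma~\ref{lem: unique_prime_unit_interval_pf}, each nontrivial prime block in the parking rearrangement of $\alpha$ is a shift of $(1,1,2,\dots,\ell-1)$ and so contributes exactly one duplicated pair (the initial two entries after shifting) to the multiset, while each trivial prime contributes a single unique value. Consequently $\alpha$'s multiset has exactly $t$ values of multiplicity $2$ and the remaining $n-2t$ values of multiplicity $1$, giving $\prod_v c_v! = 2^t$. Multiplying the orbit size by this fiber size yields $\frac{2^t n!}{\prod_{j\in[k]}\ell(j)!}$, as desired. The main subtlety is the initial reduction: translating ``fixes the displacement partition'' into the UIPF condition on $\sigma(\alpha)$ is specific to the unit-interval setting and uses that rearrangements preserve total displacement, after which the count reduces to the orbit structure already developed in this section.
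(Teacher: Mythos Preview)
Your proof is correct and follows essentially the same approach as the paper. The paper's justification is the single sentence preceding the corollary: one counts permutations $\sigma$ for which $\sigma(\alpha)$ is again unit-interval, notes that the distinct images form the orbit of size $n!/\prod_j \ell(j)!$ from Corollary~\ref{cor: orbit}, and observes that each nontrivial prime contributes a transposition of its first two entries that fixes $\alpha$, giving the extra factor $2^t$. You have supplied the details the paper leaves implicit, in particular the reduction ``fixes the displacement partition'' $\Leftrightarrow$ $\sigma(\alpha)\in\uIPFn$ and the computation $\prod_v c_v!=2^t$ via the disjointness of the value ranges of the prime components.
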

\begin{example}
    For example, the face in $P(5)$ defined by the unit-interval parking function $11244$ has an ordered prime decomposition into a length $3$ prime followed by a length $2$ prime. The symmetric group action can send this facet to any other of the ten unit-interval parking functions with the same ordered decomposition:
    \[44112, \quad 41412,  \quad 41142, \quad  41124, \quad 14412,  \quad 14142,  \quad 14124,  \quad 11442, \quad  11424.  \]
     Another facet with the same prime decomposition (and therefore shape) is labeled by $11334$. However, the order of the primes differ--- the length $2$ prime is ordered before the length $3$ prime, and no permutation will send $11244$ to $11334$, so they are not in the same orbit. Note that the number of permutations that preserve the orbit of the face labeled $11244$ is 40. The same is true for $11334$.
\end{example}

We conclude by remarking that unit-interval parking functions allow us to give a novel, simple, and elegant description of the symmetric group action on the permutohedron. Moreover, the origin of parking functions in linear probing establishes a relationship between the permutohedron and random hashing functions. Lastly, despite their unassuming nature, we have shown that unit-interval parking functions are a rich subset of parking functions with connections to a myriad of combinatorial objects.

 \begin{acknowledgements}
 This material is based upon work supported by the National Science Foundation under Grant No. DMS-1929284 while the authors were in residence at the Institute for Computational and Experimental Research in Mathematics in Providence, RI.
 P.~E.~Harris was partially supported by a Karen Uhlenbeck EDGE Fellowship.
\end{acknowledgements}
\bibliographystyle{amsplain}
\bibliography{bib}
\end{document}